    \crefname{ex}{Example}{Examples}
    \crefname{thm}{Theorem}{Theorems} 
    \crefname{lem}{Lemma}{Lemmas}
    \crefname{prop}{Proposition}{Propositions}
    \crefname{cor}{Corollary}{Corollaries} 
    \crefname{conj}{Conjecture}{Conjectures} 
    \crefname{defn}{Definition}{Definitions}
    \crefname{rmk}{Remark}{Remarks} 
    \crefname{equation}{Eq.}{Eqs.} 
    \crefname{figure}{Fig.}{Figs.}
	\newtheorem{thm}{Theorem}[section]
	\newtheorem{lem}[thm]{Lemma}
	\newtheorem*{thm*}{Theorem}
	\newtheorem*{cor*}{Corollary}
	\theoremstyle{definition} 
		\newtheorem{defn}[thm]{Definition}
		\newtheorem{ex}[thm]{Example}
    	\newtheorem{rmk}[thm]{Remark}	
    \newcommand{\df}[1]{{\bf\emph{{#1}}}}
    \pgfplotsset{compat=1.18}
        \tikzset{%
        fwdrxn/.style={very thick, arrows={-latex'}, blue},
        revrxn/.style={very thick, arrows={-Stealth[length=5pt,width=5pt,left]}},
        newt/.style={turq, opacity=0.15},
        vertex/.style={shape=circle, fill=blue, minimum size=5pt, inner sep=0, outer sep=3pt}
        }
    \newcommand\blue[1]{{\textcolor{blue}{#1}}}
    \definecolor{goldenyellow}{RGB}{253,231,36}
    \definecolor{ratecnst}{RGB}{172,172,172}
\newcommand{\eq}[1]{\begin{align*}#1\end{align*}}
	\newcommand{\eqn}[1]{\begin{align}#1\end{align}}  
\newcommand\mbf[1]{\mathbf{#1}}
\newcommand{\rr}{\ensuremath{\mathbb{R}}}
\renewcommand{\epsilon}{\varepsilon}	
\renewcommand{\phi}{\varphi}			
\DeclareMathOperator{\diag}{\mathbf{diag}}		
\DeclareMathOperator{\Span}{span}		
\newcommand{\braket}[2]{\left\langle{#1},\,{#2}\right\rangle}	
\newcommand{\kk}{\kappa}
\newcommand{\vv}[1]{{\boldsymbol{#1}}}  
\newcommand{\mm}[1]{\mathbf{#1}}               
\newcommand{\rrpp}{\rr_{>0}}
\newcommand{\Ak}{\mm A_{\vv\kk}}
\newcommand{\xx}{\vv x}
\newcommand{\xxi}{\vv \xi}
\newcommand{\yy}{\vv y}
\newcommand{\ehat}{\hat{\mbf e}}
\newcommand{\vc}[1]{\ensuremath{\begin{pmatrix}#1\end{pmatrix}}}
\newcommand{\GLV}{GLV}
\title{
   Generalized Lotka--Volterra Systems and Complex Balanced Polyexponential Systems
}
\author[1]{
        Diego Rojas La Luz%
}\author[2]{
        Polly Y. Yu%
}\author[3]{
        Gheorghe Craciun%
}
\affil[1]{\small Department of Mathematics, University of Wisconsin--Madison}
\affil[2]{\small Department of Mathematics, University of Illinois Urbana-Champaign}
\affil[3]{\small Department of Mathematics and Department of  Biomolecular Chemistry, University of Wisconsin--Madison}
\date{} 
\begin{document}

\maketitle
\renewcommand*{\thefootnote}{\arabic{footnote}}

\begin{abstract}
    \noindent
    We study the global stability of generalized Lotka--Volterra systems with generalized polynomial right-hand side,  without restrictions on the number of variables or the polynomial degree, including negative and non-integer degree. We introduce {\em polyexponential dynamical systems}, which are equivalent to the generalized Lotka--Volterra systems, and we use an analogy to the theory of mass-action kinetics to define and analyze complex balanced polyexponential systems, and implicitly analyze {\em complex balanced generalized Lotka--Volterra  systems}. We prove that complex balanced generalized Lotka--Volterra  systems have globally attracting states, up to standard conservation relations, which become linear for the associated polyexponential systems. In particular, complex balanced generalized Lotka--Volterra systems {\em cannot} give rise to periodic solutions, chaotic dynamics, or other  complex dynamical behaviors. We describe a simple sufficient condition for complex balance in terms of an associated graph structure, and we use it to analyze specific examples.
\end{abstract}

\section{Introduction}
\label{sec:introduction}

In the study of ecological systems, very extensive work has been dedicated to the analysis of dynamical systems defined on $\rrpp^n$ of the form
\eqn{\label{eq:GLV_intro}
\dfrac{dx_i}{dt}=x_i f_i(x_1,\dots,x_n),\quad \text{for }i=1,\dots,n,
}
in which $x_i$ represents the population of the $i$th species and $f_i(x_1,\dots,x_n)$ represents the per capita growth rate of that species. 
Lotka \cite{Lotka2002} and Volterra \cite{Volterra1926} were the first to study these types of systems, and Kolmogorov later extended their scope \cite{Kolmogorov1936}. Here we refer to such models as {\em generalized Lotka--Volterra systems}. 
The dynamics of these types of model can be extremely varied: Smale \cite{smale1976} and Hirsch \cite{hirsch1988-III} showed that competitive Lotka--Volterra systems can in general exhibit any asymptotic behavior, i.e., they can give rise to fixed points, limit cycles, $n$-torus attractors or even chaotic dynamics. However, despite this, there has been extensive work on finding sufficient conditions for global stability of these systems~\cites{Harrison1979,Hsu1978,Smith1986,Goh1977,Hofbauer_Sigmund_1998,Goh2012,Smith2011,Zhao2003}. 
Our goal in this paper is to provide a new framework for finding sufficient conditions for global stability of generalized Lotka--Volterra systems, inspired by corresponding results for {\em mass-action systems}. In particular, with this new framework we are able to recover some of the classical theory of global stability for Lotka--Volterra systems, and also to provide new results on global stability, which we showcase through examples that contain higher-order interaction terms.

In many cases, generalized Lotka--Volterra systems can be interpreted as mass-action kinetics, but almost always fall outside the theoretical framework developed by mass-action kinetics pioneers such as Horn, Jackson, and Feinberg~\cites{HornJackson1972,FeinbergHorn1977,Feinberg2019}. While the classical theory introduced by Horn--Jackson provides powerful tools for analyzing mass-action systems, its applicability to Lotka-–Volterra models is limited.

To illustrate this limitation, consider the dynamical system
    \eqn{
    \begin{split}\label{eq:intro-ex}
        \dfrac{dx_1}{dt} &= x_1(I_1 x_1^{-2} + r_1 - a_{11}x_1 + a_{12}x_2 - b_{11}x_1^2)\\
        \dfrac{dx_2}{dt} &= x_2(I_2 x_2^{-1} + r_2 + a_{21}x_1 - a_{22}x_2 - b_{21}x_1x_2 - b_{22}x_2^{3/2}),
    \end{split}
    }
which is a cooperative generalized Lotka--Volterra system with immigration terms and higher-order interactions. The global stability of this system cannot be deduced from classical results for mass-action systems. For example, there is no weakly reversible realization~\cite{Craciun2020} of this system, and there is no obvious way to adapt the classical theory to analyze it.

Mathematical analysis of mass-action systems often relies on the graph-theoretical structure of {\em reaction networks}, which are graphs associated to the differential equations, in that some structural properties of this graph imply certain dynamical properties, such as global stability. However, this mass-action graph is not well suited for Lotka--Volterra models.
{\em Instead, we propose a different way to associate a graph to a generalized Lotka--Volterra system, such that structural properties of \underline{this} graph imply dynamical properties  of the generalized Lotka--Volterra system.} 

The  framework we introduce here  extends the applicability of classical results, and also unveils new insights into the global stability and behavior of these systems.

The main result of this paper is an extension of the Horn--Jackson theorem, which states that for mass-action kinetic systems, a complex balanced steady state is {\em locally stable}, as shown by a Lyapunov function. We extend this result to the setting of generalized Lotka--Volterra systems, and show that if a steady state is complex balanced -- defined in a manner that extends the classical definition -- then it is {\em globally stable}. This result is significant because, while global stability of complex balanced steady states remains {\em a long-standing conjecture}\footnote{This is the well-known {\em global attractor conjecture}, which is  one of the main  open problems in the theory of mass-action systems~\cite{Feinberg2019}.} for mass-action systems, we prove it here for generalized Lotka--Volterra systems. Specifically, we show that:

\begin{thm}
\label{thm:intro}
    Consider a generalized Lotka--Volterra system on $\rrpp^n$ that admits at least one  complex balanced steady state.
    Then every positive steady state is complex balanced, and there exists a foliation of the state space $\rrpp^n$  into compatibility manifolds, such that each compatibility manifold contains exactly one complex balanced steady state $\xx^*$, and this state is globally attracting within its compatibility manifold. In particular, the function
    \eq{
        V(\xx) = \sum_{i=1}^n (\log x_i - \log x_i^*)^2,
    }
    is a global strict Lyapunov function within the compatibility manifold of $\xx^*$, where it has a strict   global minimum at $\xx = \xx^*$.
\end{thm}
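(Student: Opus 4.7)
The plan is to pass to logarithmic coordinates $\yy = \log \xx$ (componentwise), which converts the GLV system into its associated polyexponential system
\eq{
    \dot y_i \;=\; \sum_{\alpha \to \beta} k_{\alpha\beta}(\beta_i - \alpha_i)\, e^{\alpha \cdot \yy},
}
where the sum is over directed edges of the graph associated to the right-hand side, and the vertex labels $\alpha, \beta \in \rr^n$ are the monomial exponent vectors appearing in $x_i^{-1}\dot x_i$. In these coordinates the candidate Lyapunov function of the theorem becomes the squared Euclidean distance $V(\yy) = \|\yy - \yy^*\|^2$, and the compatibility manifolds become affine translates $\yy_0 + S$ of the stoichiometric subspace $S := \Span\{\beta - \alpha : \alpha \to \beta\}$.

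The core computation is a Horn--Jackson estimate transplanted into the polyexponential setting. Writing $\vv z = \yy - \yy^*$, $u_\alpha = e^{\alpha \cdot \vv z}$, and $\tilde k_{\alpha\beta} := k_{\alpha\beta}\, e^{\alpha \cdot \yy^*}$, complex balance at $\yy^*$ reads $\sum_{\alpha \to \beta} \tilde k_{\alpha\beta} = \sum_{\gamma \to \alpha}\tilde k_{\gamma \alpha}$ at every vertex $\alpha$, and direct expansion gives
\eq{
    \tfrac{1}{2}\dot V \;=\; \sum_{\alpha \to \beta} \tilde k_{\alpha\beta}\, u_\alpha\, (\beta - \alpha)\cdot \vv z \;=\; \sum_{\alpha \to \beta} \tilde k_{\alpha\beta}\, u_\alpha \log(u_\beta / u_\alpha).
}
Applying the elementary inequality $\log t \le t - 1$ and regrouping by vertex using complex balance yields $\dot V \le 0$, with equality iff $(\beta - \alpha)\cdot\vv z = 0$ for every edge, that is $\vv z \in S^\perp$. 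Since $\dot\yy \in S$ along trajectories, $\yy(t) - \yy^*$ remains in the affine translate $(\yy(0) - \yy^*) + S$, and $S \cap S^\perp = \{0\}$ forces the LaSalle set on each compatibility leaf to reduce to a single point; hence $V$ is a strict Lyapunov function on the leaf.

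I would next characterize the complex-balanced locus. A standard kernel-of-the-graph-Laplacian argument, adapted to the exponential setting, shows that $\yy^{**}$ is complex balanced iff $\alpha \cdot (\yy^{**} - \yy^*)$ is constant on each (strongly) connected component of the reaction graph, equivalently $\yy^{**} - \yy^* \in S^\perp$. This simultaneously proves that every positive steady state is complex balanced and identifies the complex-balanced locus as the affine subspace $\yy^* + S^\perp$. Because $S \oplus S^\perp = \rr^n$, each compatibility leaf $\yy_0 + S$ meets $\yy^* + S^\perp$ in exactly one point $\yy^{**}$, producing the foliation and the one-to-one correspondence between leaves and complex-balanced states; re-basing $V$ at the $\yy^{**}$ of each leaf yields the stated Lyapunov function with a strict global minimum at $\yy^{**}$.

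Finally, global attraction follows from the properness of $V$: its sublevel sets on any affine leaf are compact, so trajectories are bounded and LaSalle's invariance principle upgrades the strict Lyapunov property to global convergence $\yy(t) \to \yy^{**}$. This properness is precisely how the polyexponential setting bypasses the obstruction behind the mass-action global attractor conjecture --- in log coordinates the Lyapunov function controls the boundary of $\rrpp^n$ (where some $x_i \to 0$ or $\infty$, i.e.\ $y_i \to \pm \infty$), so trajectories cannot accumulate there. The main technical obstacle I anticipate is the algebraic identification of the complex-balanced locus as $\yy^* + S^\perp$: the polyexponential analog of the Horn--Jackson Laplacian kernel analysis must accommodate real-valued and possibly repeated vertex exponents, which requires some care.
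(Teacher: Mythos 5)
Your proposal is correct and follows essentially the same route as the paper: pass to logarithmic coordinates to obtain the polyexponential system, show $\|\yy-\yy^*\|^2$ is a proper Lyapunov function via the convexity inequality (your $\log t\le t-1$ step is algebraically identical to the paper's mean-value-theorem bound $e^z(z'-z)\le e^{z'}-e^z$) together with regrouping by vertex using complex balance, identify the steady-state/complex-balanced locus as $\yy^*+S^\perp$, and conclude uniqueness per leaf from $S\oplus S^\perp=\rr^n$ and global attraction from properness plus LaSalle. The only cosmetic difference is that you invoke the classical kernel-of-the-Laplacian characterization to identify the complex-balanced locus, whereas the paper verifies directly that $\vv\zeta-\xxi^*\in S^\perp$ forces the balance equations at $\vv\zeta$; both are sound.
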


In other words, in order to conclude global stability, it suffices to check a condition called {\em complex balance}, which consists of some algebraic identities. Moreover, we  also describe sufficient conditions for complex balance that are very easy to check; these conditions are essentially linear, and have to do with geometric properties of a specific {\em graph embedding}  that generates our generalized Lotka--Volterra system. 

\subsection{Structure of paper}

In \Cref{sec:GLV}, we describe how generalized Lotka--Volterra systems are associated to {\em Euclidean embedded graphs}, which serve as a bridge to the theory of {\em mass-action systems}, and inspiration for some key mathematical approaches. 
In \Cref{sec:polyexp}, we introduce our other main mathematical tool, i.e., {\em polyexponential systems}, and we show that they also can be generated by Euclidean embedded graphs. Moreover, we show that polyexponential systems are  equivalent to generalized Lotka--Volterra systems, via a simple change of variables. Once we have developed this machinery, we prove a theorem about polyexponential systems (\cref{thm:polyexp-HJthm}), which provides a sufficient condition for global stability. 
Then, in \Cref{sec:CB-GLV} we return to generalized Lotka--Volterra systems, and reformulate the results in the previous section to obtain our main result (\cref{thm:intro,thm:glv-cb}). We also prove an extension (\cref{thm:diag-cb-glv}) that is useful for applications. We conclude with a section on examples and one that discusses avenues for future work.

\subsection{Definitions and notation}
\label{sec:def-notations}

Throughout this work, we let $\rrpp$ denote the set of positive real numbers. Accordingly, $\rrpp^n$ denotes the set of vectors in $\rr^n$ with positive entries. Bold symbols denote vectors, e.g., $\xx = (x_1,\dots,x_n)^\top$. We write $\vv x > \vv 0$ to mean that all components of $\vv x$ are positive. 
For any $\xx \in \rrpp^n$ and $\yy \in \rr^n$, let $\xx^\yy = x_1^{y_1} x_2^{y_2} \cdots x_n^{y_n}$.
If $\mm Y \in \rr^{n \times m}$ is a matrix with column vectors $\yy_1, \ldots, \yy_m$, then $\xx^{\mm Y} = (\xx^{\yy_1},\ldots, \xx^{\yy_m})^\top$. 
The standard inner product of $\vv v, \vv w \in \rr^n$ is denoted $\braket{\vv v}{ \vv w}$. 
We extend $\exp$ and $\log$ componentwise to vectors in $\rr^n$ and $\rrpp^n$ respectively. By a straightforward calculation,  $\log (\xx^{\mm Y}) = \mm Y^\top \log \xx$. Finally, denote by $\ehat_1, \ehat_2,\dots, \ehat_n$ the standard basis of $\rr^n$, and $\diag(\xx)$ the diagonal matrix with entries $x_1,\ldots, x_n$. 

\section{Generalized Lotka--Volterra systems}
\label{sec:GLV}

Consider the dynamical system on $\rrpp^n$ given by
\begin{equation}
\label{eq:LV-general}
\dfrac{dx_i}{dt} = x_i f_i(x_1,\dots,x_n), \qquad \text{for } i =1,\ldots, n, 
\end{equation}
where the functions $f_i(\xx)$ are {\em generalized polynomials}, i.e., linear combinations of monomials with real (instead of integer) exponents; see \Cref{eq:intro-ex} for an example. We refer to systems of the form \eqref{eq:LV-general} as \df{generalized Lotka--Volterra systems}, or simply \df{GLV systems}. 
These classes of dynamical systems are widely studied as models of population dynamics, infectious disease,  evolutionary game theory, and many other applications~\cites{Hofbauer_Sigmund_1998,Goh2012,Smith2011,Zhao2003}.

In this paper we will identify a large class of \GLV\ systems (called {\em complex balanced \GLV\ systems}) that have {\em remarkably stable dynamics:} they do not allow oscillations or chaotic dynamics, they cannot have solutions that converge to infinity or to the boundary of the positive orthant, and actually, {\em any solution that starts in the positive orthant converges to a positive steady state}; moreover, each steady state is unique within a invariant manifold, which can be parametrized explicitly.

Many of the methods in our analysis are strongly inspired by the theory of {\em mass-action systems}~\cite{Yu2018}. A key step is to represent a dynamical system of interest using an embedded graph~\cite{craciun2019polynomial}. 

\begin{defn}\label{def:Egraph}
    An \df{Euclidean embedded graph} (or \df{E-graph}) is a finite directed graph $G = (V,E)$ embedded in $\rr^n$, with no self-loops, i.e., with $V \subset \rr^n$ and $E \subset V \times V$ and $(\yy,\yy) \notin E$ for any $\yy \in V$. 
\end{defn}

Let $n$ denote the dimension of the state space (i.e., the number of species), and $m$ denote the number of vertices in the graph, i.e., $m = |V|$. We enumerate the vertices, i.e., $V = \{\vv y_1,\ldots,\vv y_m\}$. 
An edge $(\yy_i, \yy_j) \in E$ is also denoted $\yy_i \to \yy_j$, where the point $\vv y_i$ is said to be its \df{source vertex}, and $\yy_j$ is its \df{product vertex}. We may also write $(i, j) \in E$ instead of $(\yy_i, \yy_j) \in E$. 
We say that an E-graph is \df{reversible} if $(i,j) \in E$ whenever $(j,i) \in E$; it is \df{weakly reversible} if every edge is part of a directed cycle.

In the theory of mass-action systems, a dynamical system of interest can be generated  from a reaction network (interpreted as an E-graph $G$) along with a choice of positive edge weights $\vv\kk \in \rrpp^{E}$.  
The goal of {\em reaction network theory} is to deduce parameter-independent dynamical information from the graph-theoretic and geometric properties of $G$. {\em In this work, we aim for a similar theory for \GLV\  systems.} We first specify how a \GLV\ system can be generated from $(G,\vv\kk)$.

\begin{defn}\label{def:glv-from-graph}
    Let $G = (V,E)$ be an E-graph and $\vv\kk = (\kk_{ij})_{(i,j)\in E} \in \rrpp^{E}$ be a choice of edge weights, and define 
       $$
       \vv f_{(G,\vv\kk)}(\xx)
       = \sum_{(i,j)\in E} \kk_{ij} \xx^{\yy_i} (\yy_j - \yy_i).
       $$ 
    The \df{\GLV\ system generated by} $(G, \vv\kk)$ is the system of differential equations on $\rrpp^n$
    \eqn{\label{eq:GLV}
        \dfrac{d\xx}{dt} = \diag(\xx) \vv f_{(G,\vv\kk)}(\xx)
        . 
    }
\end{defn}

\begin{rmk}\label{rmk:MAK}
The function $\vv f_{(G,\vv\kk)}(\xx)$
gives exactly the general formula of the right-hand side of a {\em mass-action system}. In other words, in the theory of mass-action systems, the differential equations
\eqn{
\label{eq:MAK}
   \dfrac{d\xx}{dt} = \vv f_{(G,\vv\kk)}(\xx)
}
represents the  {\em mass-action system generated by $(G,\vv\kk)$}~\cite{Feinberg2019,Yu2018}. In general, the dynamical properties of the systems \eqref{eq:GLV} and \eqref{eq:MAK} may be different; nevertheless, we will take advantage of this connection. 
\end{rmk}

\begin{ex}\label{ex:2d-stoich-glv}
    Consider the E-graph $G$ with vertex set $V = \{\ehat_1, \ehat_2, \ehat_3\}$, and edge set $E = \{ {\ehat_1 \rightleftharpoons \ehat_2}, {\ehat_2 \rightleftharpoons \ehat_3}, {\ehat_3 \rightleftharpoons \ehat_1} \}$, as shown in \Cref{fig:2d-stoich-a}. Choosing edge weights $\vv\kk = (\kk_{ij})_{(i,j)\in E} > \vv 0$ generates the \GLV\ system
    \eqn{\label{eq:2d-stoich-glv}
    \begin{split}
        \dfrac{dx_1}{dt} &= x_1 \left((-\kk_{12}-\kk_{13})x_1 +\kk_{21}x_2 + \kk_{31}x_3 \right)\\
        \dfrac{dx_2}{dt} &= x_2 \left(\kk_{12}x_1 + (-\kk_{21} -\kk_{23})x_2 + \kk_{32}x_3 \right)\\
        \dfrac{dx_3}{dt} &= x_3 \left(\kk_{13}x_1 + \kk_{23}x_2 + (-\kk_{31}-\kk_{32})x_3 \right),
    \end{split}
    }
    whose vector field (with all $\kk_{ij} = 1$) is shown on two invariant surfaces in \Cref{fig:2d-stoich-b}.
\end{ex}

    \begin{figure}[h!t]
    \centering
    \begin{subfigure}[b]{0.25\textwidth}
    \centering 
        \begin{tikzpicture}[scale=1]
        \begin{scope}[shift={(5,-3)}]
        \begin{axis}[
          view={115}{25},  
          axis lines=center,
          axis line style={gray, thick},
          width=5.5cm,height=5cm,
          ticks = none, 
          xmin=0,xmax=1.7,ymin=0,ymax=1.6,zmin=0,zmax=1.5,
        ]
        \addplot3 [no marks, dashed, gray!70] coordinates {(1,0,0)  (1,0,2.5)};
        \addplot3 [no marks, dashed, gray!70] coordinates {(0,1,0)  (0,1,2.5)};
        \addplot3 [no marks, dashed, gray!70] coordinates {(2.5,0,1)  (0,0,1) (0,2.5,1)};
        \addplot3 [no marks, dashed, gray!70] coordinates {(1,0,0)  (1,2.5,0) };
        \addplot3 [no marks, dashed, gray!70] coordinates {(0,1,0)  (2.5,1,0) };
        
        \addplot3 [only marks, blue] coordinates {(0,0,1) (0,1,0) (1,0,0) };
        
        \node [outer sep=1pt] (3) at (axis cs:0,0,1) {};
        \node [outer sep=1pt] (2) at (axis cs:0,1,0) {};
        \node [outer sep=1pt] (1) at (axis cs:1,0,0) {};
        
        \draw [fwdrxn, transform canvas={xshift=0.3ex,yshift=0.25ex}] (3)--(2); 
        \draw [fwdrxn, transform canvas={xshift=-0.3ex,yshift=-0.25ex}] (2)--(3); 
        \draw [fwdrxn, transform canvas={xshift=-0.05ex,yshift=0.35ex}] (1)--(2); 
        \draw [fwdrxn, transform canvas={xshift=0.05ex,yshift=-0.35ex}] (2)--(1); 
        \draw [fwdrxn, transform canvas={xshift=0.3ex,yshift=-0.25ex}] (3)--(1); 
        \draw [fwdrxn, transform canvas={xshift=-0.3ex,yshift=0.25ex}] (1)--(3); 
    
        \end{axis} 
        \end{scope}
        \end{tikzpicture}
    \caption{}
    \label{fig:2d-stoich-a}
    \end{subfigure}
    \hspace{0.15cm}
    \begin{subfigure}[b]{0.33\textwidth}
    \centering 
        \includegraphics[height=1.6in]{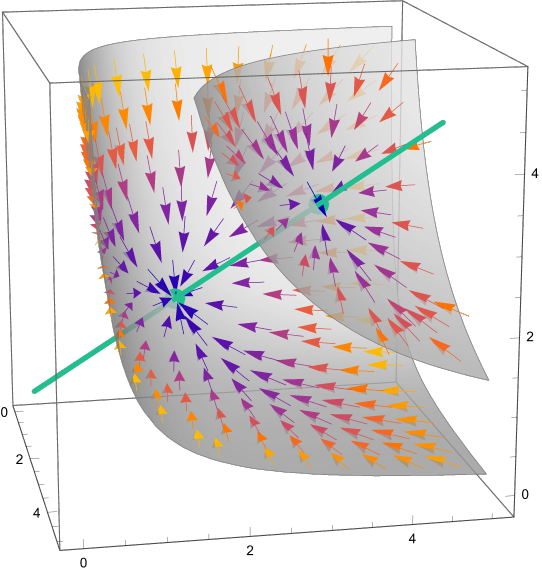}
    \caption{}
    \label{fig:2d-stoich-b}
    \end{subfigure}
    \hspace{0.5cm}
    \begin{subfigure}[b]{0.3\textwidth}
    \centering 
        \includegraphics[height=1.6in]{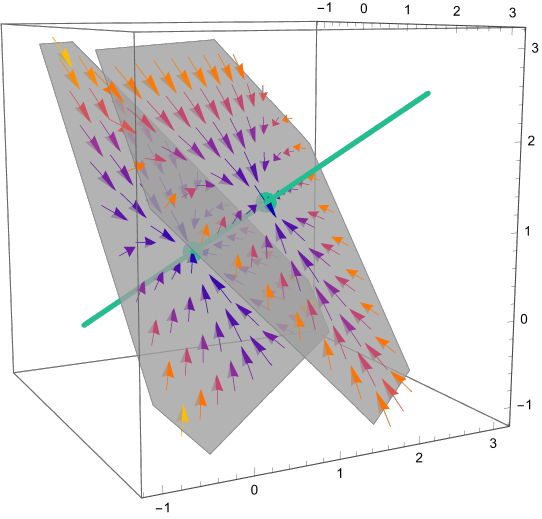}
    \caption{}
    \label{fig:2d-stoich-c}
    \end{subfigure}
    \caption{(a) The E-graph $G$ of \Cref{ex:2d-stoich-glv}, embedded in $\rr^3$. (b-c) Vector fields on invariant sets for (b) the \GLV\ system \eqref{eq:2d-stoich-glv}, and (c) after the change of variables $x_i \mapsto \log(x_i)$. The green line represents the steady state set. Note that in both (b) and (c) there is a foliation of the state space, given by invariant sets.}
    \label{fig:2d-stoich}
    \end{figure} 

As we mentioned, we can take advantage of the similarities between the right-hand sides of \eqref{eq:GLV} and \eqref{eq:MAK}. 
In particular, the vector-valued function $\vv f_{(G,\vv\kk)}(\xx)$ admits a matrix-form representation based on the structure of $(G,\vv\kk)$. 
Let $\mm Y \in \rr^{n \times m}$ be the matrix whose columns are the vertices $\yy_1, \ldots, \yy_m$. Let $\kk_{ij}$ be the edge weight of $\yy_i \to \yy_j$. The \df{Kirchoff matrix} $\Ak \in \rr^{m\times m}$ is the negative transpose of the Laplacian matrix of the weighted directed graph $(G,\vv\kk)$, i.e., 
\eq{
[\Ak]_{ij} = \begin{cases}
    \kk_{ji}, &\quad \text{ if }(i,j)\in E,\\
    \displaystyle-\sum_{\ell \colon \!   (i,\ell)\in E} \!\!\! \kk_{i\ell}, &\quad \text{ if } i=j,\\
    0, &\quad \text{otherwise}. 
\end{cases}
}
Then $\vv f_{(G,\vv\kk)}(\xx) = \mm Y \Ak \xx^{\mm Y}$~\cite{Feinberg2019}, and it follows that \eqref{eq:GLV} can be written as $\dfrac{d\xx}{dt} = \diag(\xx) \mm Y \Ak \xx^{\mm Y}$.

The E-graph $G$ is not only a directed graph, but because of its embedding, it contains geometric information. Again borrowing terms from reaction network theory, we define the notions of the {\em stoichiometric subspace} and {\em deficiency} \cite{Feinberg2019,Yu2018}, the former being the span of all vectors associated to the edges, while the latter measures how far is the embedding of $G$ from being generic (see \cite{Craciun2021} for a more in-depth discussion).

\begin{defn}\label{def:stoichiometric_subspace}
    The \df{stoichiometric subspace} of an E-graph $G$ is the vector space $S \coloneqq  \text{span}_{\rr} \{ \yy_i - \yy_j \colon  (i,j) \in E \}$. 
\end{defn}

\begin{defn}\label{def:deficiency}
    The \df{deficiency} of an E-graph $G$ is $\delta \coloneqq  |V| - \ell - \dim S$, where $\ell$ is the number of connected components of $G$, and $S$ is the stoichiometric subspace.
\end{defn}

\begin{ex}\label{ex:2d-stoich-polyexp}
    Considering the E-graph $G$ from \cref{ex:2d-stoich-glv} or \Cref{fig:2d-stoich-a}. The stoichiometric subspace of $G$ is the 2-dimensional set $S = \Span_{\rr} \left\{ {(-1,1,0)^\top,}\ {(0,-1,1)^\top,}\ {(1,0,-1)^\top}\right\}$. The deficiency of $G$ is $\delta = 3 - 1 - 2 = 0$.

    Consider the change of variables $\xxi = \log(\xx)$. Since $\dfrac{d\xi_i}{dt} = \dfrac{1}{x_i} \dfrac{dx_i}{dt}$, this results in the following system defined on $\rr^3$: 
    \eqn{\label{eq:2d-stoich-polyexp}
    \begin{split}
        \dfrac{d\xi_1}{dt} &= (-\kk_{12}-\kk_{13})e^{\xi_1} +\kk_{21}e^{\xi_2} + \kk_{31}e^{\xi_3}\\
        \dfrac{d\xi_2}{dt} &= \kk_{12}e^{\xi_1} + (-\kk_{21} -\kk_{23})e^{\xi_2} + \kk_{32}e^{\xi_3}\\
        \dfrac{d\xi_3}{dt} &= \kk_{13}e^{\xi_1} + \kk_{23}e^{\xi_2} + (-\kk_{31}-\kk_{32})e^{\xi_3}. 
    \end{split}
    }
    Because $\dfrac{d\xxi}{dt} = \sum_{(i,j)\in E} \kk_{ij} e^{\braket{\vv\xi}{ \yy_i }} (\yy_j - \yy_i) \in S$, the set $S_{\xxi_0} \coloneqq S + \xxi_0$ is \emph{invariant} for \eqref{eq:2d-stoich-polyexp} for any $\xxi_0\in \rr^3$, i.e., if $\xxi(0) \in S_{\xxi_0}$, then $\xxi(t) \in S_{\xxi_0}$ for all $t \in \rr$ for which the solution is defined. \Cref{fig:2d-stoich}(c) shows the vector field on two such invariant sets. 
\end{ex}

What we observed in \Cref{ex:2d-stoich-polyexp} holds more generally. We call systems like \eqref{eq:2d-stoich-polyexp} {\em polyexponential systems}, and they will be the focus of the next section.

\section{Polyexponential dynamical systems}
\label{sec:polyexp}

We have seen that the change of variables $\xi_i = \log(x_i)$ converts a \GLV\ system $\dfrac{d\xx}{dt} = \diag(\xx) \vv f_{(G,\vv\kk)}(\xx)$ into a system of differential equations in $e^{\xi_i}$ variables:  $\dfrac{d\xxi}{dt} = \vv f_{(G,\vv\kk)}(e^\xxi)$. We show in this section that this latter system is in some sense easier to analyze, despite the two being equivalent.

\begin{defn}\label{def:polyexponential}
    Let $G = (V,E)$ be an E-graph and $\vv\kk = (\kk_{ij})_{(i,j)\in E} \in \rrpp^{E}$ be a choice of edge weights, and define
    \eq{
    \vv \phi_{(G,\kk)}(\xxi) = \sum_{(i,j)\in E} \kk_{ij} e^{\braket{\vv\xi}{ \yy_i }} (\yy_j - \yy_i).
    }
    The \df{polyexponential system generated by} $(G,\vv\kk)$ is the system of differential equations on $\rr^n$   
    \eqn{\label{eq:polyexponential}
    \dfrac{d\vv\xi}{dt} =  \vv \phi_{(G,\kk)}(\xxi), 
    }
    or in matrix-form, $\dfrac{d\xxi}{dt} = \mm Y \Ak e^{\mm Y^\top \xxi}$.
\end{defn}

\begin{rmk}
    The dynamical system \eqref{eq:GLV},  defined on $\rrpp^n$, is equivalent to the dynamical system \eqref{eq:polyexponential}, defined on $\rr^n$, via the change of variables $x_i \mapsto \xi_i = \log(x_i)$. For example, the system \eqref{eq:2d-stoich-glv} is equivalent to the system \eqref{eq:2d-stoich-polyexp}; see \Cref{fig:2d-stoich-b,fig:2d-stoich-c}. 
\end{rmk}

As we observed in \cref{ex:2d-stoich-polyexp}, translates of the stoichiometric subspace are invariant under \eqref{eq:polyexponential}.

\begin{lem}\label{lem:compatibility_class}
    Consider the polyexponential system \eqref{eq:polyexponential} generated by $(G,\vv\kk)$, with stoichiometric subspace $S$. Then for any $\xxi_0 \in \rr^n$, its \df{compatibility class} $S_{\xxi_0} \coloneqq S + \xxi_0$ is invariant.
\end{lem}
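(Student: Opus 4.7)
The plan is to observe that the entire vector field of the polyexponential system takes values in $S$, and then to conclude invariance by integrating. This is the standard ``tangent to $S$ implies $S$-translates are invariant'' argument; there is no real obstacle here.

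First I would unpack the definition of $\vv\phi_{(G,\vv\kk)}$. For every $\xxi \in \rr^n$,
\eq{
\vv\phi_{(G,\vv\kk)}(\xxi) = \sum_{(i,j) \in E} \kk_{ij}\, e^{\braket{\vv\xi}{\yy_i}} (\yy_j - \yy_i),
}
which is a real linear combination of the edge vectors $\yy_j - \yy_i$ for $(i,j) \in E$. By \cref{def:stoichiometric_subspace}, these vectors span $S$, so $\vv\phi_{(G,\vv\kk)}(\xxi) \in S$ for all $\xxi$. Equivalently, using the matrix form $\vv\phi_{(G,\vv\kk)}(\xxi) = \mm Y \Ak e^{\mm Y^\top \xxi}$, one checks column-by-column that $\mm Y \Ak$ has columns in $S$, since each column of $\Ak$ has entries summing to zero and $\mm Y \Ak \ehat_i$ is therefore a linear combination of differences of vertex vectors.

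Next, let $\xxi(t)$ be any solution of \eqref{eq:polyexponential} with $\xxi(0) \in S_{\xxi_0}$, defined on some interval $I \ni 0$. Then $\xxi(0) - \xxi_0 \in S$, and for every $t \in I$,
\eq{
\xxi(t) - \xxi_0 = \bigl(\xxi(0) - \xxi_0\bigr) + \int_0^t \vv\phi_{(G,\vv\kk)}(\xxi(s))\, ds.
}
The integrand lies in $S$ by the first step, and $S$ is a closed linear subspace of $\rr^n$, hence closed under taking Riemann (or Bochner) integrals. Therefore the right-hand side is a sum of two elements of $S$, so $\xxi(t) - \xxi_0 \in S$, i.e., $\xxi(t) \in S_{\xxi_0}$. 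This shows $S_{\xxi_0}$ is forward and backward invariant, which completes the proof.
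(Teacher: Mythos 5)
Your proof is correct and follows essentially the same route as the paper: observe that $\vv\phi_{(G,\vv\kk)}(\xxi)$ is always a linear combination of the edge vectors $\yy_j - \yy_i$, hence lies in $S$, and conclude that $\xxi(t) - \xxi(0) \in S$ along any solution. You simply make the integration step and the closedness of $S$ explicit, which the paper leaves implicit.
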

\begin{proof}
   From \eqref{eq:polyexponential}, we see that the right-hand side of $\dfrac{d\xxi}{dt}$ is a linear combination of the  vectors of the form $\yy_j - \yy_i$. Thus, if a function $\xxi(t)$ is a solution of \eqref{eq:polyexponential} on an interval that contains $t=0$, then $\xxi(t) - \xxi(0) \in S$ for all $t$ for which $\xxi(t)$ is defined.
\end{proof}

\subsection{Complex balanced polyexponential systems}

Inspired by reaction network theory, we analyze global stability of polyexponential systems (thus implicitly \GLV\ systems) that are generated by E-graphs with specific properties. 
Recall that a system of differential equations $\dfrac{d\xx}{dt} = \vv g(\xx)$  is said to be \df{globally stable} on a forward-invariant set $\Omega$ if there exists a steady state $\xx^* \in \Omega$ such that for any initial condition $\xx(0) \in \Omega$, the solution $\xx(t)$ converges to $\xx^*$.

The key concept of a {\em complex balanced} system  was originally defined in the context of mass-action systems~\cite{HornJackson1972}; here we  reinterpret it for polyexponential systems. This reinterpretation will allow us to extend some powerful stability results in  reaction network theory~\cite{Yu2018} to a broad class of polyexponential systems.

\begin{defn}
The polyexponential system \eqref{eq:polyexponential} generated by $(G,\vv\kk)$ is said to be \df{complex balanced} if there exists $\vv\xi^* \in \rr^n$ such that for any $\yy_i \in V$, we have
    \begin{equation}\label{eq:cb}
        \sum_{j \colon (i,j)\in E} \kk_{ij} e^{\braket{\vv\xi^*}{\yy_i}} 
        = \sum_{j \colon (j,i)\in E} \kk_{ji} e^{\braket{ \vv\xi^*}{ \yy_j}}.
    \end{equation}
    The point $\xxi^*$ is called a \df{complex balanced steady state}.
\end{defn}

\begin{rmk}\label{rmk:CB-polyexp-ker_Ak}
    \Cref{eq:cb} is equivalent to $e^{\mm Y^\top \xxi^*} \in \ker \Ak$~\cite{Feinberg2019}, 
    from which it follows that $\xxi^*$ is a steady state of the polyexponential system \eqref{eq:polyexponential}. One way to interpret \eqref{eq:cb} is that at every $\yy_i \in V$, the total inflow into $\yy_i$ equals the total outflow from $\yy_i$, when evaluated at $\xxi^*$.
    (Informally, this is analogous to {\em Kirchoff's current law}, which says that the total current that enters a node of an electrical circuit equals the total current that exits that node.)
    The name \lq\lq complex balanced\rq\rq\ comes from the theory of mass-action systems, where vertices are called \lq\lq complexes\rq\rq~\cite{HornJackson1972}.
\end{rmk}

\begin{ex}\label{ex:2d-stoich-polyexp-cb}
    Consider the polyexponential system given by \eqref{eq:2d-stoich-polyexp} in \Cref{ex:2d-stoich-polyexp}. For  $\xxi^* \in \rr^3$ to be complex balanced, it needs to satisfy the following equations:
    \eqn{\label{eq:2d-stoich-polyexp-cb}
    \begin{split}
    \kk_{12}e^{\xi_1^*} + \kk_{13}e^{\xi_1^*} &= \kk_{21}e^{\xi_2^*} + \kk_{31}e^{\xi_3^*}\\
    \kk_{21}e^{\xi_2^*} + \kk_{23}e^{\xi_2^*} &= \kk_{12}e^{\xi_1^*} + \kk_{32}e^{\xi_3^*}\\
    \kk_{31}e^{\xi_3^*} + \kk_{32}e^{\xi_3^*} &= \kk_{13}e^{\xi_1^*} + \kk_{23}e^{\xi_2^*}.
    \end{split}
    }
    By \cref{rmk:CB-polyexp-ker_Ak}, we may assume $\xxi^*$ is a steady state of \eqref{eq:2d-stoich-polyexp}. For this example, any steady state will satisfy \eqref{eq:2d-stoich-polyexp-cb}, which we prove is a consequence of deficiency of $G$ being zero. 
\end{ex}

\subsection{Global stability of complex balanced polyexponential systems}

In this section, we analyze polyexponential systems that are complex balanced. We start by showing the  dynamical implications of the complex balance condition, i.e., the condition that there exists at least one complex balanced steady state. Then, we characterize when a system is complex balanced {\em for any} choice of edge weights.

We first prove that the existence of a complex balanced steady state leads to a Lyapunov function.

\begin{lem}\label{lem:polyexp-Lyapunov}\label{thm:global_stab}
    Consider the polyexponential system \eqref{eq:polyexponential}  generated by $(G,\vv\kk)$, with stoichiometric subspace $S$. Suppose there exists a complex balanced steady state  $\xxi^* \in \rr^n$. Define on $\rr^n$ the function
    \eqn{\label{eq:polyexp-Lyapunov}
        L(\xxi) = \sum_{i=1}^n (\xi_i - \xi^*_i)^2.
    }
    Then $L(\xxi)$ is a Lyapunov function for the system \eqref{eq:polyexponential}, i.e., we have $\vv \phi_{(G,\kk)}(\xxi) \cdot \nabla L(\xxi) \leq 0$ for all $\xxi \in \rr^n$. 
    Moreover, equality holds if and only if $\xxi - \xxi^* \in S^\perp$.
\end{lem}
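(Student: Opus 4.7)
The plan is to reduce $\vv\phi_{(G,\kk)}(\xxi)\cdot\nabla L(\xxi)\leq 0$ to a purely graph-theoretic inequality and then exploit the complex balance identities vertex-by-vertex. Since $\nabla L(\xxi) = 2(\xxi-\xxi^*)$, the statement is equivalent to showing $\braket{\vv\phi_{(G,\kk)}(\xxi)}{\xxi-\xxi^*}\leq 0$. I will introduce the shorthand $u_i := \braket{\yy_i}{\xxi-\xxi^*}$ and $a_{ij} := \kk_{ij}\, e^{\braket{\xxi^*}{\yy_i}} > 0$. Writing $e^{\braket{\xxi}{\yy_i}} = e^{\braket{\xxi^*}{\yy_i}}\,e^{u_i}$ and $\braket{\yy_j-\yy_i}{\xxi-\xxi^*} = u_j - u_i$, the definition of $\vv\phi_{(G,\kk)}$ yields
\begin{equation*}
\braket{\vv\phi_{(G,\kk)}(\xxi)}{\xxi-\xxi^*} \;=\; \sum_{(i,j)\in E} a_{ij}\, e^{u_i}(u_j - u_i).
\end{equation*}

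The key analytic input will be the tangent-line inequality for the convex function $t\mapsto e^t$, namely $e^{u_j} \geq e^{u_i} + e^{u_i}(u_j-u_i)$, with equality if and only if $u_i = u_j$. Applying this edgewise and multiplying by $a_{ij}>0$ gives $a_{ij}\, e^{u_i}(u_j-u_i) \leq a_{ij}(e^{u_j}-e^{u_i})$. Summing over $E$ and reindexing the right-hand side by grouping terms with a common vertex, I get
\begin{equation*}
\sum_{(i,j)\in E} a_{ij}(e^{u_j}-e^{u_i}) \;=\; \sum_{k} e^{u_k}\!\left(\sum_{i\,:\,(i,k)\in E} a_{ik} \;-\; \sum_{j\,:\,(k,j)\in E} a_{kj}\right),
\end{equation*}
and the inner parentheses vanish for every vertex $\yy_k$ by the complex balance condition \eqref{eq:cb} at $\xxi^*$ (multiplied through by $e^{\braket{\xxi^*}{\yy_k}}$ where needed). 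This gives $\braket{\vv\phi_{(G,\kk)}(\xxi)}{\xxi-\xxi^*}\leq 0$, which is the desired Lyapunov inequality.

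For the equality case, I would note that because each $a_{ij}>0$, the edgewise tangent-line inequality is tight if and only if $u_i = u_j$; thus the overall bound is attained precisely when $u_j - u_i = \braket{\yy_j-\yy_i}{\xxi-\xxi^*} = 0$ for every $(i,j)\in E$. By \Cref{def:stoichiometric_subspace}, this says exactly that $\xxi-\xxi^*$ is orthogonal to every edge vector spanning $S$, i.e., $\xxi-\xxi^* \in S^\perp$.

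I do not anticipate a serious obstacle: the argument is a clean adaptation of the Horn--Jackson Lyapunov computation, but with the convexity of $e^x$ replacing the usual log-convexity inequality, because here the state variable is $\xxi$ itself and $L$ is Euclidean rather than the Gibbs-type entropy functional. The only mildly delicate point is keeping the bookkeeping of the edge sum versus the vertex sum straight when invoking \eqref{eq:cb}, which is why I will carry out the reindexing step explicitly.
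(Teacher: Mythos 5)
Your proof is correct and follows essentially the same route as the paper's: the same edgewise convexity inequality for $e^t$ (the paper phrases it via the mean value theorem, you via the tangent-line inequality), followed by the same regrouping of the edge sum into a vertex sum whose inner terms vanish by complex balance, and the same equality analysis via strict convexity and the spanning set of $S$. The only differences are notational (your shorthand $u_i$, $a_{ij}$), and note that your parenthetical about ``multiplying through by $e^{\braket{\xxi^*}{\yy_k}}$'' is unnecessary, since \eqref{eq:cb} already reads exactly $\sum_{j} a_{kj} = \sum_{j} a_{jk}$ in your notation.
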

\begin{proof}
    Note that by the mean value theorem $e^z (z' - z) \leq e^{z'} - e^{z}$ for any $z,z' \in \rr$. Since the gradient is $\nabla L(\xxi) = 2  (\xxi - \xxi^*)$,  we have
    \begin{align*}
        \vv \phi_{(G,\kk)}(\xxi) \cdot \nabla L(\xxi) &= 2 \sum_{(i,j)\in E} \kk_{ij} e^{\braket{\yy_i}{\xxi - \xxi^* + \xxi^*} }  \braket{ \yy_j - \yy_i }{ \xxi - \xxi^* } \\
        &= 2 \sum_{(i,j)\in E}  \kk_{ij} e^{ \braket{ \yy_i}{\xxi^*} } e^{ \braket{ \yy_i}{ \xxi - \xxi^*} }  \left[ \braket{\yy_j}{\xxi - \xxi^*} - \braket{\yy_i}{\xxi - \xxi^*} \right] \\
        & \leq 2 \sum_{(i,j)\in E}  \kk_{ij} e^{ \braket{ \yy_i}{\xxi^*} } \left[ e^{ \braket{\yy_j}{\xxi - \xxi^*} } - e^{ \braket{\yy_i}{ \xxi - \xxi^*} } \right]
        , 
    \end{align*}
    where equality holds if and only if $\braket{\yy_j - \yy_i}{\xxi - \xxi^*} = 0$ for all $\yy_i \to \yy_j \in E$, i.e., if and only if $\xxi-\xxi^* \in S^\perp$. Now, instead of summing over all edges, we sum over all vertices and edges coming into or out of each vertex; this rearrangement leads to 
    \begin{align*}
        \vv \phi_{(G,\kk)}(\xxi) \cdot \nabla L(\xxi) &\leq 2 \sum_{i=1}^m \left( 
                \sum_{j \colon (j,i) \in E} \kk_{ji} e^{\braket{\yy_j}{\xxi^*}} 
                - \sum_{j \colon (i,j) \in E} \kk_{ij} e^{\braket{\yy_i}{\xxi^*}}
            \right)  e^{\braket{\yy_i}{\xxi - \xxi^*}} = 0,
    \end{align*}
    as the differences in the parentheses are zero, due to $\xxi^*$ being complex balanced. 
\end{proof}

While the calculation above is inspired by one involving the Lyapunov function in \cite{HornJackson1972} for mass-action systems, an {\em important difference} between mass-action and polyexponential systems lie in the form of the Lyapunov function. Specifically, the  function  $L(\xxi)$ in \Cref{lem:polyexp-Lyapunov} is a \emph{proper Lyapunov function}, i.e., for any $C$, the set $\{ \xxi \colon L(\xxi) \leq C\}$ is a compact set within the state space of the polyexponential system, which is {\em not} true for the analogous function for mass-action system (see \cite{Craciun2009} for more details). One major advantage of a {\em proper} Lyapunov function is that it can be used to immediately conclude global stability (while, in the theory of mass-action kinetics, the global stability of complex balanced systems is still a conjecture, even though it has been formulated a long time ago~\cite{horn1974dynamics}).

\begin{thm}\label{thm:polyexp-HJthm}
    Consider the polyexponential system \eqref{eq:polyexponential}  generated from $(G,\vv\kk)$, with stoichiometric subspace $S$. Suppose $\xxi^*$ is complex balanced. Then the following hold.
    \begin{enumerate}
        \item\label{thm-polyexp-it:CB-affine_param} A state $\vv\zeta$ is a steady state if and only if   $\vv\zeta - \vv\xi^* \in  S^\perp$. 
        \item\label{thm-polyexp-it:allssisCB} Every steady state is complex balanced. 
        \item\label{thm-polyexp-it:unique_ss} There exists exactly one steady state within each compatibility class. 
        
        \item\label{thm-polyexp-it:Lyap_global_stab} The function \eqref{eq:polyexp-Lyapunov} is a global proper Lyapunov function for \eqref{eq:polyexponential}. When restricted to a compatibility class, the function $L(\xxi)$ has a unique minimum at the complex balanced steady state contained in that class.
        In particular, each steady state is globally attracting within its compatibility class. 
    \end{enumerate}
\end{thm}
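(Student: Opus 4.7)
The plan is to prove the four assertions in the following coupled order, leveraging the Lyapunov calculation in \cref{lem:polyexp-Lyapunov} and the block structure of the Kirchoff matrix $\Ak$ with respect to the connected components of $G$. I would begin with the first assertion. For the forward direction, if $\vv\zeta$ is a steady state then $\vv \phi_{(G,\kk)}(\vv\zeta) = \vv 0$, hence $\vv \phi_{(G,\kk)}(\vv\zeta) \cdot \nabla L(\vv\zeta) = 0$, and the equality case in \cref{lem:polyexp-Lyapunov} forces $\vv\zeta - \xxi^* \in S^\perp$. For the converse, if $\vv\zeta - \xxi^* \in S^\perp$ then $\braket{\yy_j - \yy_i}{\vv\zeta - \xxi^*} = 0$ on every edge $\yy_i \to \yy_j$, so $\braket{\yy_i}{\vv\zeta - \xxi^*}$ takes a single common value $c_C$ within each connected component $C$ of $G$. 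Therefore $e^{\mm Y^\top \vv\zeta}$ equals $e^{\mm Y^\top \xxi^*}$ scaled componentwise by the constants $e^{c_C}$. Since $\Ak$ is block diagonal across the connected components (its entries vanish between distinct components), these scalars factor out of each block and $\Ak e^{\mm Y^\top \vv\zeta} = \vv 0$. Combined with \cref{rmk:CB-polyexp-ker_Ak}, this delivers both the first assertion and the second assertion that every steady state is complex balanced.

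The third assertion follows immediately. The affine subspace $\xxi^* + S^\perp$ meets any compatibility class $S_{\xxi_0} = \xxi_0 + S$ in exactly one point, namely the orthogonal projection of $\xxi^*$ onto $S_{\xxi_0}$; by the first assertion this point is a steady state. Conversely, if $\vv\zeta_1, \vv\zeta_2 \in S_{\xxi_0}$ are two steady states, then $\vv\zeta_1 - \vv\zeta_2 \in S$ by compatibility and $\vv\zeta_1 - \vv\zeta_2 \in S^\perp$ by the first assertion, forcing $\vv\zeta_1 = \vv\zeta_2$.

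For the final assertion, observe that $L(\xxi) = \|\xxi - \xxi^*\|^2$ is manifestly proper on $\rr^n$, so its sublevel sets are compact; combined with \cref{lem:polyexp-Lyapunov}, every forward orbit stays inside a compact subset of its (invariant) compatibility class. When restricted to $S_{\xxi_0}$, the strictly convex function $L$ attains its unique minimum at the orthogonal projection of $\xxi^*$ onto $S_{\xxi_0}$, which is precisely the unique steady state in $S_{\xxi_0}$ identified above. Global attraction then follows from LaSalle's invariance principle: any $\omega$-limit point must lie in the largest invariant subset of $\{\xxi : \vv \phi_{(G,\kk)}(\xxi) \cdot \nabla L(\xxi) = 0\} \cap S_{\xxi_0}$, which by the equality case of \cref{lem:polyexp-Lyapunov} together with the third assertion consists of the single steady state.

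The main obstacle I anticipate is the converse direction of the first assertion, where one must recognize that $\Ak$ inherits the connected-component block structure of $G$ and that $\ker \Ak$ is therefore preserved under componentwise constant rescaling of $e^{\mm Y^\top \xxi^*}$. Once this structural observation is in hand, the remaining steps reduce to linear algebra, strict convexity, and a standard LaSalle argument.
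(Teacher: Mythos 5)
Your proposal is correct and follows essentially the same route as the paper: the forward direction of (1) via the equality case of \cref{lem:polyexp-Lyapunov}, the converse by observing that $\vv\zeta-\xxi^*\in S^\perp$ rescales the complex-balance identities by constants (you phrase this via the block structure of $\Ak$ over connected components and $\ker\Ak$, the paper by dividing the per-vertex balance equations by $e^{\braket{\xxi^*}{\yy_i}}$ --- the same computation in different packaging), then the affine-intersection argument for (3) and properness of $L$ for (4). Your explicit invocation of LaSalle's invariance principle in (4) merely spells out what the paper leaves implicit, so there is nothing to correct.
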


\begin{proof}
    \begin{enumerate}
        \item 
        If $\vv\zeta$ is a steady state, we have $\vv \phi_{(G,\kk)}(\vv\zeta) \cdot \nabla L(\vv\zeta)=0$, so from \Cref{lem:polyexp-Lyapunov} we conclude that  $\vv\zeta - \vv\xi^* \in  S^\perp$.
        Conversely, if $\vv\zeta - \vv\xi^* \in  S^\perp$, then we have $\braket{\vv\zeta}{\yy_j-\yy_i} = \braket{\xxi^*}{\yy_j-\yy_i}$ for all $\yy_i \to \yy_j \in E$. This implies that, since $\xxi^*$ is complex balanced, for all $\yy_i \in V$ we have 
        \eq{
        \sum_{j \colon (i,j)\in E} \kk_{ij}
        = \sum_{j \colon (j,i)\in E} \kk_{ji} e^{\braket{ \vv\xi^*}{ \yy_j - \yy_i}} = \sum_{j \colon (j,i)\in E} \kk_{ji} e^{\braket{ \vv \zeta}{ \yy_j - \yy_i}},
        }
as the first equality is equivalent to \eqref{eq:cb}, and the second   equality follows from $\vv\zeta - \vv\xi^* \in  S^\perp$. Then the fact that the first and last term above are equal implies that $\vv\zeta$ is complex balanced, and therefore it is a steady state.

\item This follows from the proof of 1.

\item 
        Both existence and uniqueness follow immediately from the fact that any compatibility class is of the form $\xxi_0 + S$ while the set of steady states is of the form $\xxi^* + S^\perp$.  

        \item 
        This last claim follows from (i) the inequality in \Cref{lem:polyexp-Lyapunov}, (ii) that the sublevel sets of $L(\xxi)$ are balls, thus compact in $\rr^n$, and (iii) the unique minimum of $L(\xxi)$ in each compatibility class lies on $\xxi^* + S^\perp$. \qedhere
    \end{enumerate}
\end{proof}

\Cref{thm:polyexp-HJthm} is remarkable in that {\em global stability} of all steady states (within their compatibility classes) can be concluded just from the {\em existence of  one complex balanced steady state}. 
We now turn our attention to cases where the complex balance property can be established without having to check the equations~\eqref{eq:cb}.
Indeed, the deficiency of an E-graph  (\Cref{def:deficiency}) characterizes when the system is complex balanced for any choices of edge weights.

A complex balanced steady state $\xx^*$ for the mass-action system $\dfrac{d\xx}{dt} = \mm Y \Ak \xx^{\mm Y}$ can be defined as one satisfying $(\xx^*)^{\mm Y} \in \ker \Ak$ (see \cite{Yu2018, Feinberg2019}), while $\xxi^*$ is complex balanced for the polyexponential system $\dfrac{d\xxi}{dt} = \mm Y \Ak e^{\mm Y^\top \xxi}$ if and only if $e^{\mm Y^\top \xxi^*} \in \ker \Ak$. 
These relationships allow us to derive the theorems below by taking advantage of  analogous results from the theory of mass-action systems.

\begin{thm}\label{thm:WRD0isCBforallk} 
The polyexponential system generated by $(G,\vv\kk)$ is complex balanced for any $\vv\kk > \vv 0$ if and only if $G$ is weakly reversible and has deficiency zero.
\end{thm}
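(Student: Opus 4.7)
The plan is to reduce the statement to the classical deficiency-zero theorem of Horn, Jackson, and Feinberg for mass-action systems, via the bijection $\xxi \mapsto \xx = e^\xxi$ between $\rr^n$ and $\rrpp^n$.

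First, I would observe that complex balance for the polyexponential system coincides with complex balance for the mass-action system generated by the same $(G,\vv\kk)$, under this bijection. As the paper already notes, $\xxi^*$ is complex balanced for the polyexponential system if and only if $e^{\mm Y^\top \xxi^*} \in \ker \Ak$, while $\xx^*$ is complex balanced for the mass-action system if and only if $(\xx^*)^{\mm Y} \in \ker \Ak$. A direct calculation gives $e^{\mm Y^\top \xxi} = (e^\xxi)^{\mm Y}$ for every $\xxi \in \rr^n$, so these two conditions match precisely at corresponding points $\xx^* = e^{\xxi^*}$. Since $\xxi \mapsto e^\xxi$ is a bijection onto $\rrpp^n$, for any fixed $\vv\kk > \vv 0$ the polyexponential system generated by $(G,\vv\kk)$ admits a complex balanced steady state if and only if the mass-action system generated by $(G,\vv\kk)$ does.

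Second, I would invoke the classical deficiency-zero theorem (see, e.g., \cite{Feinberg2019,Yu2018}): the mass-action system generated by $(G,\vv\kk)$ is complex balanced for \emph{every} $\vv\kk > \vv 0$ if and only if $G$ is weakly reversible and has deficiency zero. Combined with the equivalence established in the first step, this immediately yields the theorem.

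The main obstacle, such as it is, is conceptual rather than technical: one must verify that the complex balance condition transfers cleanly across the two settings. What makes this legitimate is that complex balance is defined purely in terms of the Kirchoff matrix $\Ak$ and the exponent matrix $\mm Y$, together with a point in the positive cone parametrized in the same way by $\xx^{\mm Y}$ or by $e^{\mm Y^\top \xxi}$. The factor $\diag(\xx)$ that distinguishes the GLV/polyexponential vector fields from the mass-action vector field plays no role in the complex balance equations themselves, so the deficiency-zero theorem may be invoked as a black box without any genuine rework.
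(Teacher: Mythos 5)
Your proposal is correct and takes essentially the same route as the paper: both reduce the statement to the classical Deficiency Zero Theorem for mass-action systems, using the identity $(\xx^*)^{\mm Y} = e^{\mm Y^\top \xxi^*}$ under the bijection $\xx^* = e^{\xxi^*}$ to transfer complex balanced steady states between the two settings. If anything, your write-up is slightly more explicit than the paper's about the correspondence being a bijection (hence giving both directions of the \lq\lq if and only if\rq\rq), but the underlying argument is identical.
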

\begin{proof}
    The Deficiency Zero Theorem for mass-action systems states that $G$ being weakly reversible and  deficiency zero is necessary and sufficient for the system to have complex balanced steady state $\xx^*$ for any $\vv\kk  > \vv 0$~\cite{Yu2018, Feinberg2019}. Letting $\xxi^* = \log \xx^*$, we see that $(\xx^*)^{\mm Y} = (e^{T^\top \xxi^*})$. In other words, for any $\vv\kk > \vv 0$, the polyexponential system \eqref{eq:polyexponential} generated by $(G,\vv\kk)$ is complex balanced. 
\end{proof}

\begin{thm}\label{thm:DefZero}
    Suppose the E-graph $G$ has deficiency zero, and consider some arbitrary $\vv\kk > \vv 0$. 
    \begin{enumerate}
        \item If $G$ is weakly reversible, then within each compatibility class the polyexponential system generated by $(G,\vv\kk)$ has  a unique steady state that is complex balanced and globally stable. 
    
        \item If $G$ is not weakly reversible, then the polyexponential system generated by $(G,\vv\kk)$ cannot have any steady state, nor can it admit any periodic orbit.
    \end{enumerate}
    In particular, if the polyexponential system generated by $(G,\vv\kk)$  has a steady state, then this steady state is complex balanced and globally stable within its compatibility class. 
\end{thm}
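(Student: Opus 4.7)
The plan is to combine \cref{thm:WRD0isCBforallk} with \cref{thm:polyexp-HJthm} for the weakly reversible case, and to transfer the non-weakly-reversible half of the classical mass-action Deficiency Zero Theorem to the polyexponential setting via a direct integration over a supposed periodic orbit.

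For part 1, if $G$ is weakly reversible with deficiency zero, \cref{thm:WRD0isCBforallk} produces a complex balanced steady state $\xxi^* \in \rr^n$ for the given $\vv\kk > \vv 0$. \Cref{thm:polyexp-HJthm} then delivers all four conclusions at once: the steady state set equals $\xxi^* + S^\perp$; since $S \oplus S^\perp = \rr^n$, each compatibility class $\xxi_0 + S$ meets it in exactly one point; the proper Lyapunov function $L(\xxi)$ attains its unique minimum there; and each steady state is globally attracting within its class.

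For part 2, I would invoke the structural fact underlying the non-weakly-reversible half of the Deficiency Zero Theorem~\cite{Feinberg2019,Yu2018}: if $G$ has deficiency zero and is not weakly reversible, then no strictly positive vector $\vv v \in \rrpp^{|V|}$ satisfies $\mm Y \Ak \vv v = \vv 0$. This immediately precludes any polyexponential steady state, since a state $\xxi^*$ with $\mm Y \Ak e^{\mm Y^\top \xxi^*} = \vv 0$ would give $\vv v \coloneqq e^{\mm Y^\top \xxi^*} \in \rrpp^{|V|} \cap \ker(\mm Y \Ak)$. For periodic orbits, suppose $\xxi(t)$ has period $T$; then integrating the polyexponential equation over one period yields
\eq{
    \vv 0 \;=\; \xxi(T) - \xxi(0) \;=\; \int_0^T \mm Y \Ak \, e^{\mm Y^\top \xxi(t)} \, dt \;=\; \mm Y \Ak \, \vv v, \qquad \vv v \coloneqq \int_0^T e^{\mm Y^\top \xxi(t)} \, dt,
}
and $\vv v \in \rrpp^{|V|}$ because every component of $e^{\mm Y^\top \xxi(t)}$ is strictly positive for all $t$; this again contradicts the structural fact. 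The final ``in particular'' claim then follows by contraposition of part 2: a steady state rules out the non-weakly-reversible case, so $G$ must be weakly reversible and part 1 applies. The main obstacle is the precise form in which to cite the mass-action structural fact: the textbook non-WR half of the DZT is usually phrased in terms of mass-action steady states, and one must observe that the same proof actually rules out arbitrary positive vectors in $\ker(\mm Y \Ak)$, not merely ``monomial'' vectors of the form $\xx^{\mm Y}$ — this observation is the only new ingredient required beyond the mass-action literature.
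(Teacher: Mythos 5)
Your proposal is correct. Part 1 is exactly the paper's argument (combine \cref{thm:WRD0isCBforallk} with \cref{thm:polyexp-HJthm}), but for Part 2 you take a genuinely different, dual route. The paper starts from the same structural fact — by \cite[Prop.\ 16.5.3]{Feinberg2019}, for a deficiency-zero non-weakly-reversible $G$ one has $\sum_{(i,j)\in E}\alpha_{ij}(\yy_j-\yy_i)\neq \vv 0$ for every $\vv\alpha>\vv 0$ — but then applies Stiemke's Lemma to extract a vector $\vv p$ with $\braket{\vv p}{\yy_j-\yy_i}\geq 0$ for all edges and strict for at least one, so that $V_{\vv p}(\xxi)=-\sum_i p_i\xi_i$ is a linear strict Lyapunov function; this excludes steady states, periodic orbits, and indeed any recurrence at once. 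You instead work with the primal form of the fact: a steady state or the average $\vv v=\int_0^T e^{\mm Y^\top\xxi(t)}\,dt$ over a periodic orbit would produce a strictly positive vector in $\ker(\mm Y\Ak)$, i.e.\ a positive combination $\sum \kk_{ij}v_i(\yy_j-\yy_i)=\vv 0$, a contradiction. Both are valid; the Lyapunov route buys monotone decrease along every trajectory (hence slightly stronger dynamical conclusions), while your integration argument is more elementary and mirrors Feinberg's original proof that such networks admit no cyclic composition trajectory. Your closing worry is also resolved by the form in which the paper cites the mass-action fact: Proposition 16.5.3 is already stated for \emph{arbitrary} positive coefficient vectors $\vv\alpha$ on the edges, not merely monomial ones, so setting $\alpha_{ij}=\kk_{ij}v_i$ covers your application with no new ingredient needed.
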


\begin{proof}
    When $G$ is weakly reversible the result follows from \Cref{thm:WRD0isCBforallk,thm:polyexp-HJthm}.

    Now suppose $G$ is not weakly reversible. Then, by \cite[Proposition 16.5.3]{Feinberg2019}, $\sum_{(i,j)\in E} \alpha_{ij} (\yy_j-\yy_i) \neq \vv 0$ for all vectors $\vv \alpha = (\alpha_{ij})_{(i,j)\in E}> \vv 0$. By Stiemke's Lemma it follows that there exists a vector $\vv p$ such that the dot products $\braket{\vv p}{ \yy_j-\yy_i}$ are non-negative for all $(i,j)\in E$, and at least one such dot product is strictly positive. This implies that the function
    $V_{\vv p}(x) = -\sum_{i=1}^n p_ix_i$ 
    is a linear strict Lyapunov function for the polyexponential system \eqref{eq:polyexponential}  generated by $(G,\vv\kk)$, and therefore this system cannot have any steady state or periodic orbits.
\end{proof}

\begin{ex}\label{ex:2d-stoich-polyexp-Def0}
    In \cref{ex:2d-stoich-polyexp-cb}, we noted that any steady state $\xxi^*$ of the polyexponential system \eqref{eq:2d-stoich-polyexp} is  complex balanced. This is explained by \cref{thm:DefZero}, as the graph $G$ that generates \eqref{eq:2d-stoich-polyexp} is weakly reversible and has deficiency zero (see \cref{ex:2d-stoich-polyexp} and \cref{fig:2d-stoich-a}). Furthermore, by \cref{thm:polyexp-HJthm}, we see that for each compatibility class $S_{\xxi_0}$, the system does have a unique complex balanced steady state that is globally stable when restricted to its compatibility class $S_{\xxi_0}$, with Lyapunov function given by \eqref{eq:polyexp-Lyapunov}. Moreover, any two steady states $\xxi^*$ and $\vv\zeta$ satisfy $\vv\zeta - \vv\xi^* \in  S^\perp$. \Cref{fig:2d-stoich-c} shows the vector field on two compatibility classes.
\end{ex}

\section{Complex balanced generalized Lotka--Volterra systems}
\label{sec:CB-GLV}

In this section we leverage the results established for polyexponential systems to derive analogous results for \GLV\ systems, which are the primary focus of our study. We introduced polyexponential systems because they arose from \GLV\ systems under a change of variables. Therefore, the dynamical qualities of polyexponential systems from the last section are also properties of the corresponding \GLV\ systems.

First, recall that if the stoichiometric subspace of an E-graph $G$ is not the whole $\rr^n$, then the polyexponential system is invariant on affine subsets (called compatibility classes) that are obtained by shifting the stoichiometric subspace $S$. The \GLV\ system $\dfrac{d\xx}{dt} = \diag(\xx)\vv f_{(G,\vv\kk)}(\xx)$ generated by the same E-graph, which is diffeomorphic to $\dfrac{d\xxi}{dt} = \vv \phi_{(G,\vv\kk)}(\xxi)$, therefore also has invariant subsets.

\begin{lem}\label{lem:compatibility_manifold}
    Consider the \GLV\ system \eqref{eq:GLV} generated by $(G,\vv\kk)$, with stoichiometric subspace $S$. For any $\xx_0 \in \rrpp^n$, its \df{compatibility manifold} $\exp(S_{\xxi_0}) \coloneqq \exp(S + \xxi_0)$ is invariant, where $\xxi_0 = \log \xx_0$. 
    In particular, for any non-zero $\vv w \in S^\perp$, the equation  $\vv w^\top \log \xx(t) = \vv w^\top  \log \xx(0)$ defines a first integral of \eqref{eq:GLV}.
\end{lem}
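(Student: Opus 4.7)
The plan is to leverage the diffeomorphism $\xx \mapsto \xxi = \log \xx$ between the state spaces of the \GLV\ system and the polyexponential system generated by the same $(G, \vv\kk)$, and then transport the invariance already established in \Cref{lem:compatibility_class} for the polyexponential system.

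First I would make the change of variables precise: if $\xx(t) \in \rrpp^n$ is a solution of the \GLV\ system \eqref{eq:GLV} with $\xx(0) = \xx_0$, then $\xxi(t) \coloneqq \log \xx(t)$ (defined componentwise) is well-defined and differentiable, and a direct computation using $\dfrac{d\xi_i}{dt} = \dfrac{1}{x_i}\dfrac{dx_i}{dt}$ shows that $\xxi(t)$ satisfies the polyexponential system \eqref{eq:polyexponential} generated by $(G, \vv\kk)$, with initial condition $\xxi(0) = \xxi_0 = \log \xx_0$. This is exactly the equivalence already noted in the remark following \Cref{def:polyexponential}.

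Next, \Cref{lem:compatibility_class} applied to the polyexponential system gives $\xxi(t) \in S_{\xxi_0} = S + \xxi_0$ for every $t$ in the interval of existence of the solution. Applying the componentwise exponential (which is a bijection between $\rr^n$ and $\rrpp^n$) yields $\xx(t) = \exp(\xxi(t)) \in \exp(S + \xxi_0)$, which is the desired invariance of the compatibility manifold. For the first-integral statement, for any $\vv w \in S^\perp$ we have $\braket{\vv w}{\xxi(t) - \xxi_0} = 0$, since $\xxi(t) - \xxi_0 \in S$; rewriting this as $\vv w^\top \log \xx(t) = \vv w^\top \log \xx(0)$ gives the claimed conserved quantity.

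There is no real obstacle here, since the content of the lemma is essentially a translation of \Cref{lem:compatibility_class} through the diffeomorphism $\log$; the only thing to be slightly careful about is to note that $\log$ and $\exp$ are well-defined bijections between $\rrpp^n$ and $\rr^n$ and commute with the differentiation step that converts \eqref{eq:GLV} into \eqref{eq:polyexponential}. Once that is observed, the invariance and the first integral both follow in one line each.
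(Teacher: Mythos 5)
Your proof is correct and follows exactly the route the paper intends: the paper states this lemma without a separate proof, justifying it by the observation (in the paragraph preceding the lemma) that the \GLV\ system is diffeomorphic via $\xx \mapsto \log \xx$ to the polyexponential system, so the invariance of the compatibility classes from \Cref{lem:compatibility_class} transports directly, and the first integral is an immediate consequence of $\xxi(t) - \xxi_0 \in S$ paired against $\vv w \in S^\perp$. Your write-up simply makes explicit the details the paper leaves implicit, with no gaps.
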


\begin{ex}\label{ex:compatibility-glv}
    Recall that the \GLV\ system given by \eqref{eq:2d-stoich-glv} in \Cref{ex:2d-stoich-glv} has a two-dimensional stoichiometric subspace. 
    The compatibility manifold of a point $\xx_0 \in \rrpp^n$ for \eqref{eq:2d-stoich-glv} is the set $\exp(S + \xxi_0)$ where $\xxi_0 = \log(\xx_0)$, as shown in \cref{fig:2d-stoich-b}. Note that any such set is an invariant set for \eqref{eq:2d-stoich-glv}.
\end{ex}

We now define the notion of complex balance for \GLV\ systems.

\begin{defn}\label{def:GLV-CB}
    The \GLV\ system \eqref{eq:GLV} generated by $(G,\vv\kk)$ is said to be \df{complex balanced} if there exists $\xx^* \in \rrpp^n$ such that for any $\yy_i \in V$, we have
    \begin{equation}\label{eq:glv-cb}
        \sum_{j \colon (i,j)\in E} \kk_{ij} (\xx^*)^{\yy_i} = \sum_{j \colon (j,i)\in E} \kk_{ji} (\xx^*)^{\yy_j}.
    \end{equation}
    The point $\xx^*$ is called a \df{complex balanced steady state}.
\end{defn}

\begin{lem}\label{lem:cov-CB}
    Consider the \GLV\ system \eqref{eq:GLV} and the polyexponential system \eqref{eq:polyexponential}, each generated by $(G,\vv\kk)$. Then $\xx^*$ is complex balanced for the \GLV\ system if and only if $\log \xx^*$ is complex balanced for the polyexponential system. 
\end{lem}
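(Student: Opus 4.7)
The plan is to prove the equivalence by direct substitution, leveraging the fundamental identity $\xx^{\yy} = e^{\braket{\log \xx}{\,\yy}}$ for $\xx \in \rrpp^n$ and $\yy \in \rr^n$, which was noted in the \lq\lq Definitions and notation\rq\rq\ subsection (indeed, $\log(\xx^{\yy}) = \braket{\log \xx}{\,\yy}$, so exponentiating gives the identity). Setting $\xxi^* = \log \xx^*$ converts every occurrence of $(\xx^*)^{\yy_i}$ in the GLV complex balance condition \eqref{eq:glv-cb} into $e^{\braket{\xxi^*}{\,\yy_i}}$, which is exactly the term appearing in the polyexponential complex balance condition \eqref{eq:cb}.

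First I would fix a vertex $\yy_i \in V$ and write out the GLV complex balance equation \eqref{eq:glv-cb} at that vertex. I would then substitute $(\xx^*)^{\yy_i} = e^{\braket{\log \xx^*}{\,\yy_i}}$ and $(\xx^*)^{\yy_j} = e^{\braket{\log \xx^*}{\,\yy_j}}$ on the left and right sides, obtaining precisely \eqref{eq:cb} with $\xxi^* = \log \xx^*$. Since this substitution is a bijection between $\rrpp^n$ and $\rr^n$ (componentwise $\log$ has inverse componentwise $\exp$), the conditions hold at corresponding points for all $\yy_i \in V$ simultaneously if and only if they hold for one another. This immediately gives both directions of the equivalence.

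There is essentially no obstacle here: the lemma is a routine translation of definitions through the change of variables $\xx \mapsto \log \xx$, which is the same change of variables that relates the GLV system \eqref{eq:GLV} to the polyexponential system \eqref{eq:polyexponential}. The only thing to be careful about is to state the computation cleanly for both directions, noting that $\xx^* \in \rrpp^n$ ensures $\log \xx^* \in \rr^n$ is well-defined, and conversely every $\xxi^* \in \rr^n$ arises as $\log \xx^*$ for the unique $\xx^* = \exp(\xxi^*) \in \rrpp^n$.
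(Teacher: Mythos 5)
Your proof is correct and is essentially the same as the paper's: both rest on the identity $\xx^{\yy} = e^{\braket{\log\xx}{\yy}}$ applied to the complex balance equations, the only cosmetic difference being that the paper phrases the conditions compactly as $\xx^{\mm Y} \in \ker\Ak$ versus $e^{\mm Y^\top \xxi} \in \ker\Ak$ while you work vertex by vertex with \eqref{eq:glv-cb} and \eqref{eq:cb}. Your explicit remark that $\log$ is a bijection from $\rrpp^n$ to $\rr^n$ cleanly handles both directions.
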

\begin{proof}
    The definitions of complex balanced in polyexponential and \GLV\ systems can be rewritten as $e^{\mm Y^\top \xxi} \in \ker \Ak $ and $\xx^{\mm Y} \in \ker \Ak $ respectively. The claim follows from $\xx^{\mm Y} = e^{\mm Y^\top \log \xx}$.
\end{proof}

\begin{ex}\label{ex:glv-cb}
    Consider the \GLV\ system \eqref{eq:2d-stoich-glv} in \cref{ex:2d-stoich-glv}.
    The point $\xx^* \in \rrpp^3$ is complex balanced if it  satisfies the following equations:
    \eq{
    \kk_{12}x_1^* + \kk_{13}x_1^* &= \kk_{21}x_2^* + \kk_{31}x_3^*\\
    \kk_{21}x_2^* + \kk_{23}x_2^* &= \kk_{12}x_1^* + \kk_{32}x_3^*\\
    \kk_{31}x_3^* + \kk_{32}x_3^* &= \kk_{13}x_1^* + \kk_{23}x_2^*.
    }
    Similar to the case of \cref{ex:2d-stoich-polyexp-cb}, all steady states $\xx^*\in \rrpp^3$ are complex balanced according to \cref{lem:cov-CB}. Moreover, we also obtain analogous conclusions as in \cref{ex:2d-stoich-polyexp-Def0}, i.e., for each compatibility manifold $\exp(S_{\xxi_0})$ the system  \eqref{eq:2d-stoich-glv} has a unique complex balanced steady state that is globally stable when restricted to its compatibility manifold $\exp(S_{\xxi_0})$, and any two positive steady states $\xx^*, \vv z^*$ satisfy $\log \vv z - \log \xx^* \in S^{\perp}$. 
\end{ex}

Using \Cref{lem:cov-CB}, we obtain the three theorems below, which are analogues of \Cref{thm:polyexp-HJthm,thm:WRD0isCBforallk,thm:DefZero}.

\begin{thm}\label{thm:glv-cb}
    Consider the \GLV\ system \eqref{eq:GLV} on $\rrpp^n$ generated by  $(G,\vv\kk)$, with stoichiometric subspace $S$. Suppose $\xx^*$ is a complex balanced steady state. Then the following hold. 
    \begin{enumerate}
        \item A positive state $\vv z$ is a steady state if and only if we have $\log \vv z - \log \xx^* \in S^{\perp}$.

        \item Every positive steady state is complex balanced. 
        
        \item There is exactly one positive steady state within each compatibility manifold. 
        
        \item The function 
        \eqn{\label{eq:GLV-Lyapunov}
            V(\xx) = \sum_{i=1}^n (\log x_i - \log x_i^*)^2, 
        }
        defined on $\rrpp^n$, is a global proper Lyapunov function for \eqref{eq:GLV}. When restricted to a compatibility manifold, the function $V(\xx)$ has a unique minimum at the complex balanced steady state contained in that compatibility manifold. In particular, every positive steady state is globally attracting within its compatibility manifold. 
    \end{enumerate}
\end{thm}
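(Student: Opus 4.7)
The plan is to transfer Theorem \ref{thm:polyexp-HJthm} to the GLV setting via the diffeomorphism $\Phi \colon \rrpp^n \to \rr^n$, $\Phi(\xx) = \log \xx$. Since $\diag(\xx)$ is invertible for $\xx > \vv 0$, the right-hand side of \eqref{eq:GLV} vanishes at $\xx$ if and only if $\vv f_{(G,\vv\kk)}(\xx)=\vv 0$, which by the chain rule is equivalent to $\vv\phi_{(G,\vv\kk)}(\log\xx)=\vv 0$, i.e., $\log\xx$ is a steady state of \eqref{eq:polyexponential}. Combined with Lemma \ref{lem:cov-CB}, this sets up a bijection between positive steady states of \eqref{eq:GLV} and steady states of \eqref{eq:polyexponential}, and between complex balanced states in the two settings. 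Setting $\xxi^* \coloneqq \log \xx^*$, the hypothesis gives that $\xxi^*$ is a complex balanced steady state of the polyexponential system, so Theorem \ref{thm:polyexp-HJthm} applies at $\xxi^*$.

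For part 1, Theorem \ref{thm:polyexp-HJthm}\ref{thm-polyexp-it:CB-affine_param} characterizes polyexponential steady states as the set $\xxi^* + S^\perp$; writing $\vv\zeta = \log \vv z$ translates this to $\log \vv z - \log \xx^* \in S^\perp$. Part 2 is then immediate: any positive steady state $\vv z$ gives a polyexponential steady state $\log \vv z$, which is complex balanced by Theorem \ref{thm:polyexp-HJthm}\ref{thm-polyexp-it:allssisCB}, and Lemma \ref{lem:cov-CB} pulls this back to show $\vv z$ is complex balanced. Part 3 follows because $\Phi$ maps the compatibility manifold $\exp(S+\xxi_0)$ bijectively onto the compatibility class $S+\xxi_0$, and Theorem \ref{thm:polyexp-HJthm}\ref{thm-polyexp-it:unique_ss} supplies a unique steady state in the latter.

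For part 4, the natural candidate is $V(\xx) = L(\log \xx)$ where $L$ is the Lyapunov function from \eqref{eq:polyexp-Lyapunov}; this is exactly \eqref{eq:GLV-Lyapunov}. By the chain rule, $\nabla V(\xx) = \diag(\xx)^{-1}\nabla L(\log \xx)$, so along trajectories of \eqref{eq:GLV},
\begin{align*}
    \frac{d}{dt}V(\xx(t))
    &= \bigl\langle \nabla V(\xx),\ \diag(\xx)\vv f_{(G,\vv\kk)}(\xx)\bigr\rangle
    = \bigl\langle \nabla L(\log\xx),\ \vv\phi_{(G,\vv\kk)}(\log\xx)\bigr\rangle \leq 0
\end{align*}
by Lemma \ref{lem:polyexp-Lyapunov}, with equality iff $\log\xx - \xxi^* \in S^\perp$, i.e., iff $\xx$ is a positive steady state by part 1. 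The strict minimum of $V$ on a compatibility manifold is the image under $\exp$ of the strict minimum of $L$ on the corresponding compatibility class, which is the unique complex balanced steady state there. Global attraction follows by a standard LaSalle argument once properness is verified.

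The only point requiring care is the properness of $V$ on the open manifold $\rrpp^n$: one must check that each sublevel set $\{\xx\in\rrpp^n : V(\xx)\leq C\}$ is compact in $\rrpp^n$, not merely bounded in $\rr^n$. This is the step where properness of $L$ (which is trivial in $\rr^n$ since $L$ is coercive) has genuine content after the change of variables, because $\rrpp^n$ is not closed in $\rr^n$. But $V(\xx)\leq C$ is equivalent to $\log\xx$ lying in a Euclidean ball of radius $\sqrt{C}$ about $\xxi^*$, whose image under $\exp$ is a closed and bounded subset of $\rrpp^n$ bounded away from the coordinate hyperplanes; this is the main (mild) obstacle, and once dispatched the LaSalle conclusion in each compatibility manifold follows.
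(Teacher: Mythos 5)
Your proposal is correct and is exactly the route the paper takes: the paper states \cref{thm:glv-cb} without a written proof, deriving it from \cref{thm:polyexp-HJthm} via the change of variables $\xx \mapsto \log\xx$ and \cref{lem:cov-CB}, and your argument simply writes out the details of that transfer (including the properness of $V$ on $\rrpp^n$, which is the point the paper itself highlights after \cref{lem:polyexp-Lyapunov}).
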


The level sets of $V(\xx)$ are images of the sphere centered at $\log \xx^*$ under the component-wise exponentiation map; see \Cref{fig:GLV-Lyapunov} for examples in $\rrpp^2$ and $\rrpp^3$.

\begin{figure}[h!t]
\centering
    \begin{subfigure}[b]{0.4\textwidth}
    \centering
        \includegraphics[height=4cm]{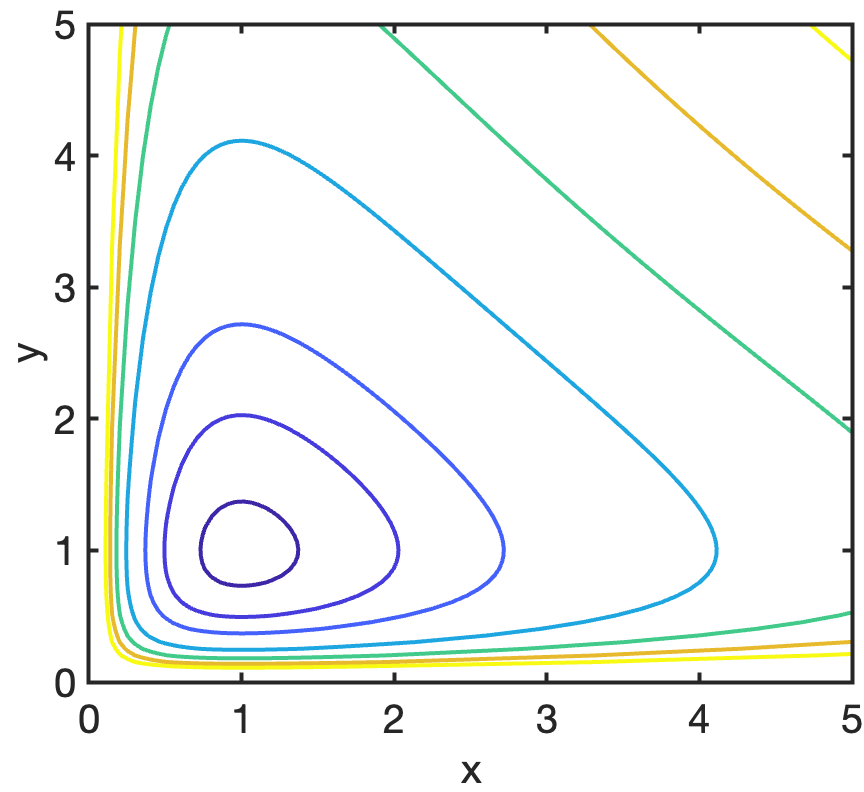}
    \caption{} 
    \label{fig:GLV-Lyapunov-2d}
    \end{subfigure}
    \begin{subfigure}[b]{0.4\textwidth}
    \centering
        \includegraphics[height=4cm]{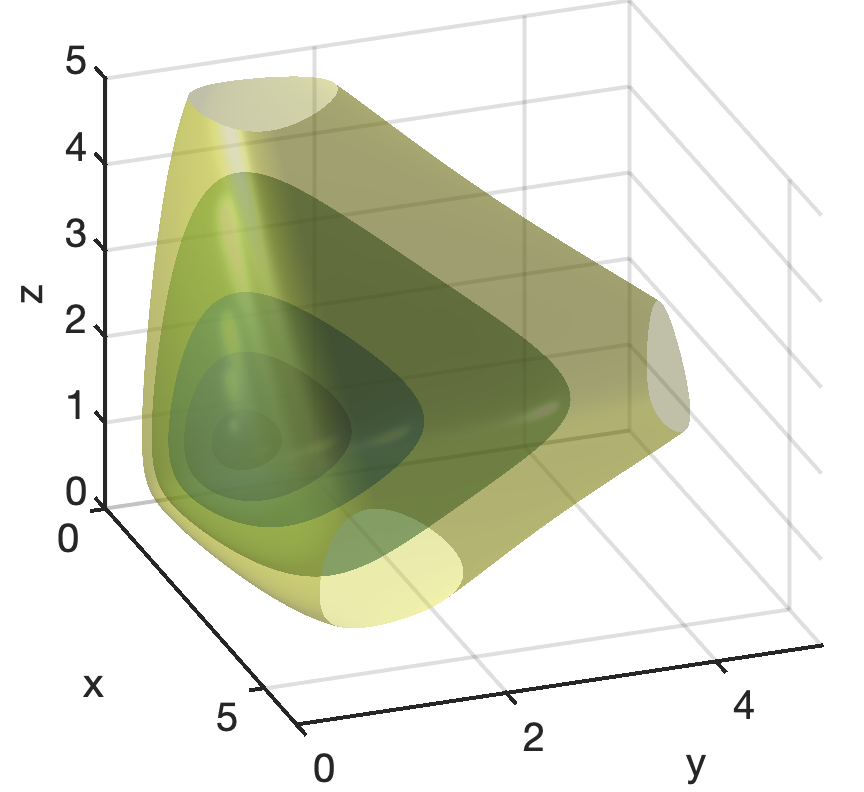}
    \caption{} 
    \label{fig:GLV-Lyapunov-3d}
    \end{subfigure}
\caption{Level sets of the Lyapunov function \eqref{eq:GLV-Lyapunov} in (a) $\rrpp^2$ and (b) $\rrpp^3$, with  $\xx^* = \mathbbm{1}$.}
\label{fig:GLV-Lyapunov}
\end{figure}

\begin{thm}
    The \GLV\ system generated by $(G,\vv\kk)$ is complex balanced for any $\vv \kk > \vv 0$ if and only if $G$ is weakly reversible and has deficiency zero. 
\end{thm}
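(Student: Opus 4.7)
The plan is to derive this statement directly from the analogous polyexponential version, \Cref{thm:WRD0isCBforallk}, via the change-of-variables correspondence established in \Cref{lem:cov-CB}. Both theorems concern the same underlying pair $(G,\vv\kk)$; the only difference is whether complex balance is witnessed by a point in $\rrpp^n$ (for the \GLV\ system) or in $\rr^n$ (for the polyexponential system). Since $\xx \mapsto \log \xx$ is a bijection $\rrpp^n \to \rr^n$, the existence of a complex balanced state in one setting should be tautologically equivalent to its existence in the other, for the same choice of edge weights.

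For the forward direction, I would assume that the \GLV\ system generated by $(G,\vv\kk)$ is complex balanced for every $\vv\kk > \vv 0$. Fix such a $\vv\kk$ and pick a complex balanced $\xx^* \in \rrpp^n$. By \Cref{lem:cov-CB}, the point $\xxi^* = \log \xx^* \in \rr^n$ is then complex balanced for the polyexponential system generated by the same $(G,\vv\kk)$. Since $\vv\kk > \vv 0$ was arbitrary, the polyexponential system is complex balanced for every choice of positive edge weights, and \Cref{thm:WRD0isCBforallk} then forces $G$ to be weakly reversible and of deficiency zero.

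For the converse, I would assume $G$ is weakly reversible with deficiency zero, fix any $\vv\kk > \vv 0$, and apply \Cref{thm:WRD0isCBforallk} to obtain a complex balanced state $\xxi^* \in \rr^n$ of the polyexponential system generated by $(G,\vv\kk)$. Setting $\xx^* := \exp(\xxi^*) \in \rrpp^n$ and invoking the other direction of \Cref{lem:cov-CB} yields a complex balanced state of the \GLV\ system generated by $(G,\vv\kk)$. As $\vv\kk$ was arbitrary, the \GLV\ system is complex balanced for every positive weight vector.

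The proof is in effect a corollary, so there is no substantive obstacle; the only thing to watch is that \Cref{lem:cov-CB} was stated as an \emph{if and only if}, allowing the correspondence to be used in both directions. No additional estimates, Lyapunov arguments, or graph-theoretic computations beyond those already performed in the polyexponential setting are required.
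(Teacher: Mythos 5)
Your proposal is correct and matches the paper's intent exactly: the paper states that this theorem, like its neighbors, follows from \Cref{thm:WRD0isCBforallk} via the correspondence of \Cref{lem:cov-CB}, and your two-directional transfer through $\xx \mapsto \log\xx$ is precisely that argument spelled out. No gaps.
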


\begin{thm}\label{thm:glv-DefZero}
    Suppose the E-graph $G$ has deficiency zero, and consider some arbitrary $\vv\kk > \vv 0$.
    \begin{enumerate}
        \item If $G$ is weakly reversible, then within each compatibility manifold the \GLV\ system generated by $(G,\vv\kk)$ has a unique positive steady state that is complex balanced and globally stable.
    
        \item If $G$ is not weakly reversible, then the \GLV\ system generated by $(G,\vv\kk)$ cannot have any positive steady state, nor can it admit any periodic orbit.
    \end{enumerate}
    In particular, if the \GLV\ system \eqref{eq:GLV} has  a positive steady state, then this steady state is complex balanced and globally stable within its compatibility manifold.
\end{thm}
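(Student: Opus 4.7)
My plan is to reduce \cref{thm:glv-DefZero} entirely to the corresponding polyexponential statement \cref{thm:DefZero} via the change of variables $\xi_i = \log x_i$, which is a diffeomorphism between $\rrpp^n$ and $\rr^n$. The key observation is that the \GLV\ system \eqref{eq:GLV} generated by $(G,\vv\kk)$ is conjugate, under this diffeomorphism, to the polyexponential system \eqref{eq:polyexponential} generated by the \emph{same} $(G,\vv\kk)$. Since weak reversibility and deficiency are purely graph-theoretic properties of $G$, they are the same hypotheses for both systems, so the relevant cases of \cref{thm:DefZero} can be applied directly.

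For part 1, suppose $G$ is weakly reversible and deficiency zero. By \cref{thm:DefZero}(1), the polyexponential system has, within each compatibility class $S_{\xxi_0} = S + \xxi_0$, a unique complex balanced steady state $\xxi^*$ that is globally attracting. Set $\xx^* = \exp(\xxi^*)$. By \cref{lem:cov-CB}, $\xx^*$ is complex balanced for the \GLV\ system. By \cref{lem:compatibility_manifold}, the compatibility manifold through $\xx_0 = \exp(\xxi_0)$ is exactly $\exp(S_{\xxi_0})$, and this is invariant for \eqref{eq:GLV}. Since $\exp$ is a diffeomorphism that conjugates the two flows, any solution $\xx(t)$ of \eqref{eq:GLV} starting in $\exp(S_{\xxi_0})$ corresponds bijectively to a solution $\xxi(t) = \log \xx(t)$ of \eqref{eq:polyexponential} starting in $S_{\xxi_0}$. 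Global attraction $\xxi(t) \to \xxi^*$ then transfers, by continuity of $\exp$, to $\xx(t) \to \xx^*$; uniqueness within the compatibility manifold likewise transfers via $\log$.

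For part 2, suppose $G$ is not weakly reversible (but still has deficiency zero). By \cref{thm:DefZero}(2), the polyexponential system \eqref{eq:polyexponential} has no steady states and no periodic orbits. A positive steady state $\xx^* \in \rrpp^n$ of \eqref{eq:GLV} would satisfy $\diag(\xx^*) \vv f_{(G,\vv\kk)}(\xx^*) = \vv 0$; since $\diag(\xx^*)$ is invertible, this forces $\vv f_{(G,\vv\kk)}(\xx^*) = \vv 0$, so $\xxi^* = \log \xx^*$ is a steady state of \eqref{eq:polyexponential}, a contradiction. Likewise, a periodic orbit of \eqref{eq:GLV} in $\rrpp^n$ would push forward under $\log$ to a periodic orbit of \eqref{eq:polyexponential} in $\rr^n$, again a contradiction.

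The final clause of the theorem combines the two cases: if \eqref{eq:GLV} has a positive steady state, then by part 2 the graph must be weakly reversible, so part 1 applies and the steady state is complex balanced and globally attracting within its compatibility manifold. The only substantive check in this proof is the correspondence between trajectories and steady states under the coordinate change, which is routine because $\exp \colon \rr^n \to \rrpp^n$ is a smooth bijection with smooth inverse; I do not foresee any genuine obstacle, as all the heavy lifting (the deficiency zero theorem, the Lyapunov argument, Stiemke-based arguments for the non–weakly-reversible case) has already been carried out at the polyexponential level.
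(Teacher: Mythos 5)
Your proposal is correct and follows essentially the same route as the paper: the paper obtains \cref{thm:glv-DefZero} precisely by reducing to the polyexponential statement \cref{thm:DefZero} through the $\log$/$\exp$ conjugacy and the complex-balance correspondence of \cref{lem:cov-CB}, which is exactly your argument. The only difference is that the paper leaves this reduction implicit (it states the three GLV theorems as immediate analogues), whereas you spell out the transfer of steady states, periodic orbits, and global attraction, which is a routine but welcome verification.
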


\bigskip

We now extend our main result to a larger class of \GLV\ systems. In \Cref{sec:examples-applications}, we will see in specific examples that this generalization provides a powerful method for obtaining conditions for global stability in applications. In particular, it allows us to recover a classical result that gives conditions for global stability for quadratic cooperative Lotka--Volterra systems (\cref{thm:coop-glv}).

\begin{thm}\label{thm:diag-cb-glv}
    Let $G$ be an E-graph with stoichiometric subspace $S$, and let $\vv\kk > \vv 0$ be a choice of edge weights. Let $\mm D = \diag(\vv d)$ be a $n \times n$ positive diagonal matrix. Consider the system 
    \eqn{\label{eq:diagD-glv}
    \dfrac{d\xx}{dt} = \mm D \diag (\xx) \vv f_{(G,\vv\kk)}(\xx).
    }
    Suppose the \GLV\ system $\dfrac{d\xx}{dt} = \diag (\xx) \vv f_{(G,\vv\kk)}(\xx)$ generated by $(G,\vv\kk)$ has a complex balanced steady state $\xx^*$. Then the following hold:
    \begin{enumerate}
        \item Any positive steady state $\vv z$ of \eqref{eq:diagD-glv} satisfies $\log \vv z - \log \xx^* \in S^{\perp}$. 

        \item For any $\xxi_0 \in \rr^n$, the set $\exp(\mm D S + \xxi_0)$ is an invariant set  of  \eqref{eq:diagD-glv}, and contains exactly one positive steady state.  

        \item The function 
            \eqn{\label{eq:scaled-Lyapunov-glv}
                V(\xx) = \sum_{i=1}^n \frac{1}{d_i} (\log x_i - \log x_i^*)^2 , 
            } 
        defined on $\rrpp^n$ is a global proper Lyapunov function for \eqref{eq:diagD-glv}. In particular, every positive steady state is globally stable within its invariant set.
    \end{enumerate}
\end{thm}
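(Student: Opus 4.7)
The plan is to reduce everything to the results already proven for polyexponential systems via the change of variables $\xi_i = \log x_i$, under which \eqref{eq:diagD-glv} becomes the modified polyexponential system $\frac{d\xxi}{dt} = \mm D\, \vv\phi_{(G,\vv\kk)}(\xxi)$. Since $\mm D$ is invertible, the steady states of this system coincide exactly with those of the unmodified polyexponential system $\frac{d\xxi}{dt} = \vv\phi_{(G,\vv\kk)}(\xxi)$. Because $\xx^*$ is by hypothesis complex balanced for the unmodified \GLV\ system, $\xxi^* = \log \xx^*$ is complex balanced for the unmodified polyexponential system, and \Cref{thm:polyexp-HJthm}(1) applied to that system immediately yields part 1: every positive steady state $\vv z$ of \eqref{eq:diagD-glv} satisfies $\log \vv z - \log \xx^* \in S^\perp$.

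For part 2, observe that the right-hand side of the modified polyexponential system always lies in $\mm D S$, so each affine translate $\mm D S + \xxi_0$ is invariant in log coordinates, and exponentiating gives the claimed invariant manifolds. The geometric crux I need is that $\mm D S$ and $S^\perp$ are complementary subspaces of $\rr^n$. This follows from the positivity of $\mm D$: if $\vv v = \mm D \vv s \in S^\perp$ with $\vv s \in S$, then $\vv v \cdot \vv s = \vv s^\top \mm D \vv s = \sum_i d_i s_i^2$, which vanishes only when $\vv s = \vv 0$, and $\vv v \in S^\perp$ forces $\vv v \cdot \vv s = 0$; hence $\vv v = \vv 0$ and $\rr^n = \mm D S \oplus S^\perp$. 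Combined with part 1, the steady state set $\xxi^* + S^\perp$ intersects each $\xxi_0 + \mm D S$ in exactly one point, yielding existence and uniqueness of a positive steady state within each invariant manifold.

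For part 3, set $L(\xxi) = V(e^\xxi) = \sum_i (\xi_i - \xi_i^*)^2/d_i$ and compute $\nabla L(\xxi) = 2\mm D^{-1}(\xxi - \xxi^*)$. The key cancellation is
\begin{equation*}
\tfrac{dL}{dt} = \nabla L(\xxi) \cdot \mm D\, \vv\phi_{(G,\vv\kk)}(\xxi) = 2(\xxi - \xxi^*) \cdot \vv\phi_{(G,\vv\kk)}(\xxi),
\end{equation*}
since $(\mm D^{-1}\vv u) \cdot (\mm D \vv w) = \vv u \cdot \vv w$ for diagonal $\mm D$. The right-hand side is exactly the quantity bounded in \Cref{lem:polyexp-Lyapunov}, so $\frac{dL}{dt} \leq 0$ with equality if and only if $\xxi - \xxi^* \in S^\perp$, i.e.\ if and only if $\xxi$ is a steady state. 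Properness of $L$ is clear since its sublevel sets are ellipsoids in $\rr^n$; that the unique steady state in each invariant set is the unique minimum of $L$ restricted to that set follows from convexity of $L$ together with the fact that at the steady state $\vv z$, $\nabla L(\log \vv z) \in \mm D^{-1} S^\perp$ is orthogonal to $\mm D S$ (again by the same diagonal cancellation). A standard LaSalle argument, using properness to conclude that trajectories remain in a compact sublevel set, then gives global attraction to the unique steady state within each invariant manifold.

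The main obstacle is really just recognizing the two paired linear-algebraic observations that drive the whole proof: the decomposition $\rr^n = \mm D S \oplus S^\perp$, which uses the strict positivity of $\mm D$ in an essential way, and the cancellation $(\mm D^{-1}\vv u) \cdot (\mm D \vv w) = \vv u \cdot \vv w$, which explains why $1/d_i$ (rather than $d_i$) must be the weights in the Lyapunov function \eqref{eq:scaled-Lyapunov-glv}. Once these are in hand, everything reduces cleanly to the polyexponential theory already established in \Cref{sec:polyexp}.
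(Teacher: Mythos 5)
Your proof is correct and follows essentially the same route as the paper's: pass to log coordinates, note that the steady-state set $\xxi^*+S^\perp$ is unchanged by the invertible factor $\mm D$, intersect it with the invariant translates of $\mm D S$, and verify that the $\mm D^{-1}$-weighted Lyapunov function cancels the $\mm D$ in the vector field so that \Cref{lem:polyexp-Lyapunov} applies verbatim. Your explicit argument that $\rr^n=\mm D S\oplus S^\perp$ via $\vv s^\top\mm D\vv s=\sum_i d_i s_i^2>0$ supplies a detail the paper's proof asserts without justification, so it is a welcome addition rather than a deviation.
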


\begin{proof}
    \begin{enumerate}
        \item Under the  change of variables $x_i \mapsto \xi_i = \log x_i$,  the system \eqref{eq:diagD-glv}  is transformed into
        \eqn{\label{eq:diag-polyexp}
        \dfrac{d\xxi}{dt} = \mm D \vv \phi_{(G,\kk)}(\xxi),
        }
        where $\dfrac{d\xxi}{dt} =  \vv \phi_{(G,\kk)}(\xxi)$ is the polyexponential system generated by $(G,\vv\kk)$. Since the set of steady states for \eqref{eq:diag-polyexp} is $\xxi^* + S^\perp$ independent of $\mm D$, the claim (1) follows by reversing back to the $x_i$ variables.
    
        \item The compatibility class of $\xxi_0\in\rr^n$ for \eqref{eq:diag-polyexp} is not $\xxi_0 + S$, but instead $\xxi_0 + \mm D S$. The intersection $(\xxi^* + S^\perp) \cap (\xxi_0 + \mm DS)$ contains exactly one point for any $\xxi^*$; therefore each invariant set $\xxi_0 + \mm D S$ contains exactly one steady state for \eqref{eq:diag-polyexp}. 
        Accordingly, each invariant set $\exp(\mm D S + \xxi_0)$ contains exactly one positive steady state for \eqref{eq:diag-polyexp}. 

        \item Consider the function $L(\xxi) = \sum_i \frac{1}{d_i}(\xi_i - \xi_i^*)^2$, obtained by applying the change of variables to \eqref{eq:scaled-Lyapunov-glv}. Its gradient is $\nabla L(\xxi) = 2 \mm D^{-1} (\xxi - \xxi^*)$. 
        Thus, $\braket{\dfrac{d\xxi}{dt}}{\nabla L(\xxi)} = 2\braket{\vv \phi_{(G,\kk)}(\xxi)}{\xxi-\xxi^*}$, which we have shown in \Cref{lem:polyexp-Lyapunov} to be non-positive, and it is equal to zero if and only if $\xxi - \xxi^* \in S^\perp$. Therefore $L(\xxi)$ is a proper Lyapunov function for \eqref{eq:diag-polyexp}, and each steady state $\vv\zeta$ is globally stable within its compatibility class $\vv \zeta + \mm D S$. By reversing the change of variables we complete the proof of (3). \qedhere 
    \end{enumerate}
\end{proof}

\section{Examples}
\label{sec:examples-applications}

In this section we look at some specific examples, in order to illustrate  how the theory we developed can be used to prove  global stability for various types of \GLV\ systems. We start by using \Cref{thm:glv-DefZero} and \Cref{thm:diag-cb-glv} to recover a classical result. Then, we illustrate how to use \Cref{thm:diag-cb-glv} to find conditions for global stability for a \GLV\ system with higher-order interaction terms.
Finally, we analyze a specific example, i.e., with fixed parameters, and show that it is globally stable using \Cref{thm:glv-cb}.

\subsection{Analysis of a classical global stability result}

The following result is an illustration of 
how to use \Cref{thm:glv-DefZero} and \Cref{thm:diag-cb-glv} to recover a classical result on global stability of quadratic cooperative Lotka--Volterra systems~\cite{Smith1986,Goh1977,Goh1979,Hofbauer_Sigmund_1998}.
    
    \begin{figure}[h!t]
    \centering 
        \begin{tikzpicture}[scale=1]
        \begin{scope}[shift={(5,-3)}]
        \begin{axis}[
          view={115}{25},  
          axis lines=center,
          axis line style={gray, thick},
          width=5.5cm,height=5cm,
          ticks = none, 
          xmin=0,xmax=1.7,ymin=0,ymax=1.6,zmin=0,zmax=1.5,
        ]
        \addplot3 [no marks, dashed, gray!70] coordinates {(1,0,0)  (1,0,2.5)};
        \addplot3 [no marks, dashed, gray!70] coordinates {(0,1,0)  (0,1,2.5)};
        \addplot3 [no marks, dashed, gray!70] coordinates {(2.5,0,1)  (0,0,1) (0,2.5,1)};
        \addplot3 [no marks, dashed, gray!70] coordinates {(1,0,0)  (1,2.5,0) };
        \addplot3 [no marks, dashed, gray!70] coordinates {(0,1,0)  (2.5,1,0) };
        
        \addplot3 [only marks, blue] coordinates {(0,0,0) (0,0,1) (0,1,0) (1,0,0)};
        
        \node [outer sep=1pt] (3) at (axis cs:0,0,1) {};
        \node [outer sep=1pt] (2) at (axis cs:0,1,0) {};
        \node [outer sep=1pt] (1) at (axis cs:1,0,0) {};
        \node [outer sep=1pt] (0) at (axis cs:0,0,0) {};
        
        \begin{scope}[transform canvas={xshift=0.ex, yshift=0.2ex}]
        \draw [fwdrxn, transform canvas={xshift=0.3ex,yshift=0.25ex}] (3)--(2); 
        \draw [fwdrxn, transform canvas={xshift=-0.3ex,yshift=-0.25ex}] (2)--(3); 
        \end{scope}
        \begin{scope}[transform canvas={xshift=0.2ex, yshift=-0.5ex}]
        \draw [fwdrxn, transform canvas={xshift=-0.ex,yshift=0.35ex}] (1)--(2); 
        \draw [fwdrxn, transform canvas={xshift=-0.05ex,yshift=-0.35ex}] (2)--(1); 
        \end{scope}
        \begin{scope}[transform canvas={xshift=-0.4ex, yshift=0.2ex}]
        \draw [fwdrxn, transform canvas={xshift=0.3ex,yshift=-0.25ex}] (3)--(1); 
        \draw [fwdrxn, transform canvas={xshift=-0.3ex,yshift=0.25ex}] (1)--(3); 
        \end{scope}

        \begin{scope}[transform canvas={xshift=0.5ex, yshift=0.35ex}]
        \draw [fwdrxn, transform canvas={xshift=-0.35ex,yshift=0.15ex}] (0)--(1); 
        \draw [fwdrxn, transform canvas={xshift=0.35ex,yshift=-0.15ex}] (1)--(0); 
        \end{scope}
        \begin{scope}[transform canvas={xshift=-0.25ex, yshift=0.1ex}]
        \draw [fwdrxn, transform canvas={xshift=-0.2ex,yshift=-0.35ex}] (0)--(2); 
        \draw [fwdrxn, transform canvas={xshift=-0ex,yshift=0.35ex}] (2)--(0); 
        \end{scope}
        
        \draw [fwdrxn, transform canvas={xshift=2pt,yshift=-2pt}] (0)--(3); 
        \draw [fwdrxn, transform canvas={xshift=-2pt,yshift=-2pt}] (3)--(0); 
    
        \end{axis} 
        \end{scope}
        \end{tikzpicture}
    \caption{E-graph $G$ that generates the \GLV\ system \eqref{eq:coop-glv-graph} for $n=3$. With appropriate choice of edge weights, the system \eqref{eq:diag-coop-glv} is generated by a subgraph of $G$. }
    \label{fig:coop-LV}
    \end{figure}
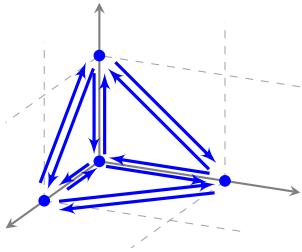 

\begin{thm}[\cite{Goh1977,Hofbauer_Sigmund_1998}]\label{thm:coop-glv}
    Consider the cooperative \GLV\ system 
    \eqn{\label{eq:coop-glv}
    \dfrac{d\xx}{dt} = \textbf{diag}(\xx) (\vv r + \mm A \xx),
    }
    where $r_i > 0$, $[\mm A]_{ij} = a_{ij} \geq 0$ for $i\neq j$, and $a_{ii} < 0$. Suppose that ${\vv x^*} \in \rrpp^n$ is a positive steady state of \eqref{eq:coop-glv}. If there exists $d_1,\ldots,d_n>0$ such that
    \eqn{\label{eq:diag-dominant}
    \sum_{i=1}^n d_i a_{ij} \leq 0
    }
    for all $j$, then \eqref{eq:coop-glv} is globally stable  on $\rrpp^n$, with  ${\vv x^*}$ as its globally attracting steady state. 
\end{thm}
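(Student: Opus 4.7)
The plan is to realize \eqref{eq:coop-glv} in the framework of \Cref{thm:diag-cb-glv} by choosing a suitable E-graph together with edge weights that exactly encode the coefficients $r_i$ and $a_{ij}$. Let $G$ be the E-graph on vertices $\{\vv 0,\ehat_1,\ldots,\ehat_n\}$ depicted in \Cref{fig:coop-LV}, retaining only those edges whose weights below are strictly positive. Take the scaling matrix $\mm D = \diag(1/d_1,\ldots,1/d_n)$, and assign edge weights
\eq{
\kk_{0,i} &= d_i r_i, \\
\kk_{j,i} &= d_i a_{ij} \quad \text{for } j\ne i, \\
\kk_{i,0} &= -\sum_{j=1}^n d_j a_{ji}.
}
A direct componentwise calculation shows that $\mm D\,\diag(\xx)\,\vv f_{(G,\vv\kk)}(\xx) = \diag(\xx)(\vv r + \mm A\xx)$, so the scaled system is exactly \eqref{eq:coop-glv}. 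Positivity of $\kk_{0,i}$ follows from $d_i,r_i>0$; non-negativity of $\kk_{j,i}$ for $j\ne i$ follows from cooperativity; and non-negativity of $\kk_{i,0}$ is precisely the hypothesis \eqref{eq:diag-dominant} after the relabeling $i\leftrightarrow j$.

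Next I would verify that $\xx^*$ is a complex balanced steady state for the \emph{unscaled} GLV system generated by $(G,\vv\kk)$; equivalently, the steady state of \eqref{eq:coop-glv} is a steady state of the unscaled system because $\mm D$ is invertible. At each vertex $\ehat_i$, using $\kk_{i,j}=d_j a_{ji}$ (the same formula with the roles of $i,j$ swapped), the outflow equals
\eq{
x_i^*\Bigl(\kk_{i,0}+\sum_{j\ne i}\kk_{i,j}\Bigr) = -d_i a_{ii}x_i^*,
}
while the inflow equals
\eq{
\kk_{0,i}+\sum_{j\ne i}\kk_{j,i}x_j^* = d_i\Bigl(r_i+\sum_{j\ne i}a_{ij}x_j^*\Bigr) = -d_i a_{ii}x_i^*
}
by the steady-state equation $r_i+\sum_j a_{ij}x_j^*=0$. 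Balance at the origin $\vv 0$ follows similarly: the outflow is $\sum_i d_i r_i$, while the inflow is $\sum_i\kk_{i,0}x_i^* = -\sum_j d_j\sum_i a_{ji}x_i^* = \sum_j d_j r_j$ using the steady-state equations once more.

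Finally I would apply \Cref{thm:diag-cb-glv}. Since $\kk_{0,i}>0$ for every $i$, each standard basis vector $\ehat_i=\ehat_i-\vv 0$ lies in the stoichiometric subspace $S$ of $G$, so $S=\rr^n$ and $\mm D S=\rr^n$. Hence there is a single invariant set of the form $\exp(\mm D S+\xxi_0)$, namely the entire positive orthant $\rrpp^n$; and $S^\perp=\{\vv 0\}$, so $\xx^*$ is the unique positive steady state. \Cref{thm:diag-cb-glv} then yields that $\xx^*$ is globally attracting on $\rrpp^n$, with proper Lyapunov function $V(\xx)=\sum_i d_i(\log x_i-\log x_i^*)^2$. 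The main obstacle, and essentially the crux of the argument, is producing the right realization: the edge weights must reproduce \eqref{eq:coop-glv} on the nose, while the non-negativity condition on $\kk_{i,0}$ coincides exactly with the hypothesis \eqref{eq:diag-dominant}. Once this realization is in place, the remaining complex balance check and the application of \Cref{thm:diag-cb-glv} are direct.
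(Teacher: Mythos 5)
Your proposal is correct and follows essentially the same route as the paper: you realize the rescaled system via the E-graph on $\{\vv 0,\ehat_1,\ldots,\ehat_n\}$ of \Cref{fig:coop-LV}, assign exactly the same edge weights $\kk_{0i}=d_i r_i$, $\kk_{ij}=d_j a_{ji}$, $\kk_{i0}=-\sum_j d_j a_{ji}$, observe that their non-negativity is precisely the cooperativity hypothesis together with \eqref{eq:diag-dominant}, and then invoke \Cref{thm:diag-cb-glv}. The one genuine difference is how complex balance of $\xx^*$ is established: the paper notes that the realized subgraph has deficiency zero (all $n+1$ vertices in one connected component, $S=\rr^n$) and appeals to \Cref{thm:glv-DefZero}, whereas you verify the balance equations \eqref{eq:glv-cb} at each vertex directly from the steady-state relations $r_i+\sum_j a_{ij}x_j^*=0$. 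Your direct computation is more self-contained and makes the role of the steady-state equations explicit; the paper's deficiency argument is shorter and illustrates the structural criterion that the paper is advertising, but both are valid, and your bookkeeping (the convention $\mm D=\diag(1/d_1,\ldots,1/d_n)$, the resulting Lyapunov function $\sum_i d_i(\log x_i-\log x_i^*)^2$, and the conclusion $S^\perp=\{\vv 0\}$ giving uniqueness on all of $\rrpp^n$) is consistent throughout.
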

\begin{proof}
    By \cref{thm:diag-cb-glv}, it suffices to find a positive matrix $\mm D^{-1} = \diag(d_1,\ldots, d_n)$ such that
    \eqn{\label{eq:diag-coop-glv}
    \dfrac{d\xx}{dt} 
    &= \textbf{diag}(\xx) (\mm D^{-1} \vv r + \mm D^{-1} \mm A \xx) 
    =  \textbf{diag}(\xx) \left(  
        \sum_{i=1}^n d_i r_i \ehat_i + \sum_{i=1}^n d_i a_{ii} x_i \ehat_i + \sum^n_{\substack{i,j=1 \\i \neq j}} d_j a_{ji} x_i \ehat_j 
     \right)
    }
    can be generated by an E-graph $G^*$ with $\xx^*$ as a complex balanced steady state. Denote by $\ehat_0$ the origin of $\rr^n$, and consider the E-graph $G = (V,E)$ where 
    \eq{
    V &=  \{\ehat_i : 0\leq i\leq n\} 
    \quad \text{and} \quad 
    E = \{\ehat_i \xrightleftharpoons[\kk_{ji}]{\kk_{ij}}  \ehat_j: 0\leq i <  j \leq n\},
    }
    with edge weights $\kk_{ij}$ as indicated (see \Cref{fig:coop-LV} for the three-dimensional version). 
    The \GLV\ system generated by  $(G, \vv\kk)$ is 
    \eqn{\label{eq:coop-glv-graph}
    \dfrac{d\xx}{dt} 
    &=\diag(\xx)\left( 
        \sum_{i=1}^n \kk_{0i} \ehat_i 
        - \sum_{i=1}^n \kk_{i0} x_i \ehat_i 
        + \sum^n_{\substack{i,j=1\\ j \neq i}}  \kk_{ij}x_i (\ehat_j - \ehat_i)
     \right) .
    }
    We will find $\kk_{ij} \geq 0$ such that \eqref{eq:coop-glv-graph} is equal to \eqref{eq:diag-coop-glv}. 
    If any $\kk_{ij} = 0$, we exclude the corresponding edge in the target E-graph $G^*$.
    
    To find $\kk_{ij}$, we set the coefficients for each monomial in \eqref{eq:diag-coop-glv} and \eqref{eq:coop-glv-graph} to be equal. From the constant terms, we obtain $\kk_{0i} = d_i r_i$ for each $i=1,\ldots, n$. From the coefficients of $x_i$, 
    \eq{ 
        -\left( \kk_{i0} + \sum_{j\neq i} \kk_{ij} \right) \ehat_i + \sum^n_{ j \neq i} \kk_{ij} \ehat_j 
        = \sum_{j=1}^n d_j a_{ji} \ehat_j .
    }
    Therefore, for $1 \leq i \neq j \leq n$, we have
    \eq{
    \kk_{0i} = d_i r_i, \quad
    \kk_{ij} = d_j a_{ji}, \quad
    \kk_{i0} = - \sum_{j=1}^n d_j a_{ji},
    }
    where $\kk_{ij} \geq 0$.
    Note that the hypotheses on $\vv r$ and $\mm A$ imply that $\kk_{0i} > 0$,  $\kk_{ij} \geq 0$, and $\kk_{i0} \geq 0$. 

    Therefore, under the stated assumptions, \eqref{eq:coop-glv-graph} is the GLV system generated by $(G^*, \vv\kk)$ where $G^*$ is a subgraph of $G$. 

    To conclude global stability of $\xx^*$ for $\dfrac{d\xx}{dt} = \diag(\xx) (\vv r + \mm A \xx)$ using \cref{thm:diag-cb-glv}, it suffices to show that $\xx^*$ is a complex balanced steady state for $(G^*,\vv\kk)$ and that its stoichiometric subspace is $\rr^n$. 
    Indeed, because $\kk_{0i}>0$, the edge $\vv 0 \to \ehat_i$ is in $G^*$; therefore its stoichiometric subspace is $\rr^n$, and $G^*$ contains $n+1$ vertices, which all lie in the same connected component. Thus, the deficiency of $G^*$ is $\delta = (n+1) - 1 - n = 0$. By \cref{thm:glv-DefZero} we conclude that the positive steady state $\xx^*$  is complex balanced, and thus, by \cref{thm:diag-cb-glv}, globally attracting. 
\end{proof}

\subsection{Analysis of models with higher-order interactions}

We now analyze the global stability of a \GLV\ model with higher-order interaction (HOI) terms~\cite{aladwani2019, singh2020, gibbs2022, billick1994, levine2017}. HOI terms have been introduced to better understand species coexistence in diverse communities. Recent works have shown that higher-order terms can stabilize dynamics~\cite{grilli2017}, shape the diversity of species~\cite{bairey2016}, and capture otherwise unexplained dynamics of ecological systems~\cite{mayfield2017}. In general, the Lotka--Volterra model for three-way HOI~\cite{gibbs2022, singh2020} is
\eq{
\dfrac{dx_i}{dt} = x_i \left(r_i + \sum_{j}a_{ij}x_j + \sum_{j}\sum_{k} b_{ijk} x_j x_k \right).
}
As an example, we will show how to use \cref{thm:diag-cb-glv} in order to analyze global stability of a two-species cooperative Lotka--Volterra system. 
This approach can be extended for higher-dimensional models~\cite{CraciunRojasLaLuz2025}.

\begin{ex}\label{ex:HOI-ex}
For simplicity, consider the following \GLV\ model with HOI terms involving two species~\cite{aladwani2019}:
    \eqn{\label{eq:HOI-ex}
    \begin{split}
    \dfrac{dx_1}{dt} &= x_{1}(r_1 - a_{11} x_1 + a_{12}x_2  - b_1x_1 x_2),\\
    \dfrac{dx_2}{dt} &= x_{2}(r_2 + a_{21} x_1 - a_{22}x_2  - b_2x_1 x_2),
    \end{split}
    }
where $r_i>0$, $a_{ij}\geq 0$, and $b_i\geq 0$ for $i,j=1,2$. Suppose $\xx^* = (x_1^*,x_2^*)$ is a positive steady state\footnote{
    That a positive steady state $\xx^*$ exists follows from the main theorem in \cite{Boros2019} for example.
} 
for the system \eqref{eq:HOI-ex}; we show that $\xx^*$ is a globally attracting point by showing that $\xx^*$ is complex balanced for some \GLV\ system and applying \cref{thm:diag-cb-glv}.

\begin{figure}[h!t]
\centering 
    \begin{tikzpicture}[scale=2]
        \draw [step=1, gray!50!white, thin] (0,0) grid (1.25,1.25);
        \node at (0,1.35) {};
            \draw [->, gray] (0,0)--(1.25,0);
            \draw [->, gray] (0,0)--(0,1.25);
            \node [inner sep=2pt] (1) at (0,0) {\blue{$\bullet$}};
            \node [inner sep=2pt] (2) at (1,0) {\blue{$\bullet$}};
            \node [inner sep=2pt] (3) at (1,1) {\blue{$\bullet$}};
            \node [inner sep=2pt] (4) at (0,1) {\blue{$\bullet$}};

            \node at (0.5,0) [below] {\blue{$\kk_{21}$}};
            \node at (0,0.5) [left] {\blue{$\kk_{41}$}};
            \node at (0.5,1) [below] {\blue{$\kk_{43}$}};
            \node at (1,0.5) [left] {\blue{$\kk_{23}$}};

            \node at (0.5,0) [above] {\blue{$\kk_{12}$}};
            \node at (0,0.5) [right] {\blue{$\kk_{14}$}};
            \node at (0.5,1) [above] {\blue{$\kk_{34}$}};
            \node at (1,0.5) [right] {\blue{$\kk_{32}$}};
            
            \draw [-{stealth}, thick, blue, transform canvas={xshift=0ex, yshift=0.5ex}] (1)--(2) ;
            \draw [-{stealth}, thick, blue, transform canvas={xshift=0ex, yshift=-0.5ex}] (2)--(1) ;
            \draw [-{stealth}, thick, blue, transform canvas={xshift=0.5ex, yshift=0ex}] (1)--(4) ;
            \draw [-{stealth}, thick, blue, transform canvas={xshift=-0.5ex, yshift=-0ex}] (4)--(1) ;

            \draw [-{stealth}, thick, blue, transform canvas={xshift=0.5ex, yshift=0ex}] (3)--(2) ;
            \draw [-{stealth}, thick, blue, transform canvas={xshift=-0.5ex, yshift=-0ex}] (2)--(3) ;
            \draw [-{stealth}, thick, blue, transform canvas={xshift=0ex, yshift=0.5ex}] (3)--(4) ;
            \draw [-{stealth}, thick, blue, transform canvas={xshift=-0ex, yshift=-0.5ex}] (4)--(3) ;
    \end{tikzpicture}
    \caption{E-graph $G$ that generates the \GLV\ system \eqref{eq:HOI-2d-general-square}. With appropriate choice of edge weights, \eqref{eq:HOI-ex-diag} is generated by a subgraph of $G$.}
    \label{fig:HOI-2D-general-square}
\end{figure} 

Consider\footnote{%
While our goal is to analyze the system \eqref{eq:HOI-ex}, we turn our attention to \eqref{eq:HOI-ex-diag}, because this system has additional free parameters $d_1, d_2$ that we can take advantage of.
} instead the rescaled system
    \eqn{\label{eq:HOI-ex-diag}
    \begin{split}
    \dfrac{dx_1}{dt} &= x_{1}d_1(r_1 - a_{11} x_1 + a_{12}x_2  - b_1x_1 x_2),\\
    \dfrac{dx_2}{dt} &= x_{2}d_2(r_2 + a_{21} x_1 - a_{22}x_2  - b_2x_1 x_2),
    \end{split}
    }
for some $d_1, d_2>0$. 
We will find $d_i$ such that \eqref{eq:HOI-ex-diag} is generated by a subgraph of the E-graph $G$ shown in \Cref{fig:HOI-2D-general-square}.  Formally, $G$ and the indicated edge weights generate the \GLV\ system
\eqn{\label{eq:HOI-2d-general-square}
\dfrac{d}{dt}\vc{x_1\\ x_2} &= \diag(x_1,x_2)\left(\vc{\kk_{12}\\ \kk_{14}} + \vc{-\kk_{21}\\ \kk_{23}}x_1 + \vc{\kk_{43}\\ -\kk_{41}}x_2 + \vc{-\kk_{34}\\ -\kk_{32}}x_1x_2\right),
}
where $\kk_{ij}\geq 0$. 
For \eqref{eq:HOI-ex-diag} and \eqref{eq:HOI-2d-general-square} to be equal on $\rrpp^2$, we must have 
\begin{gather*}
    \kk_{12}=d_1r_1,\qquad \kk_{21}=d_1a_{11},\qquad  \kk_{43}=d_1a_{12},\qquad  \kk_{34}=d_1b_1,\\
    \kk_{14}=d_2r_2,\qquad \kk_{23}=d_2a_{21},\qquad \kk_{41}=d_2a_{22},\qquad \kk_{32}=d_2b_2.
\end{gather*}
In addition, we want $\xx^*$ to be complex balanced for \eqref{eq:HOI-2d-general-square}. With the appropriate substitutions for $\kk_{ij}$ and using the fact that $\xx^*$ is a steady state, the complex balance equations \eqref{eq:glv-cb} reduce to 
\eq{
d_1r_1 + d_2r_2 = d_1a_{11}x^*_1+d_2a_{22}x^*_2,
}
which has a solution if and only if
\eqn{\label{eq:sufficient-cond}
\text{sign}(r_1 - a_{11}x_1^*) = \text{sign}(a_{22}x_2^* - r_2).
}
Therefore \eqref{eq:sufficient-cond} is a sufficient condition\footnote{This assumption can be removed and this analysis can be done in a much more general setting, by using a complete graph instead of the graph $G$ in \Cref{fig:HOI-2D-general-square}. This will be addressed in future work~\cite{CraciunRojasLaLuz2025}.} for $\xx^*$ to be a globally attracting point for the system \eqref{eq:HOI-ex}.  
\end{ex}

\bigskip

Our approach for the proof of \cref{thm:coop-glv} and the analysis of \cref{ex:HOI-ex} rely on the fact that if we want to conclude  global stability for a \GLV\ system, it is enough to find an E-graph $G$ generating the system, such that it is complex balanced. The same approach can be used in many other cases. In the following example we  prove global stability of a particular instance of the system \eqref{eq:intro-ex} from the introduction. In future work~\cite{CraciunRojasLaLuz2025} we will describe how to apply this approach systematically in order to  obtain general conditions for \eqref{eq:intro-ex} to be globally stable, using the methods described here.

\begin{ex}\label{ex:generic-ode}
    Consider the dynamical system
    \eqn{
    \begin{split}\label{eq:generic-ode}
        \dfrac{dx_1}{dt} &= x_1(10 x_1^{-2} - 7 - 6x_1 + 4x_2 - x_1^2)\\
        \dfrac{dx_2}{dt} &= x_2(6x_2^{-1} + 5 + 2x_1 - x_2 - 6x_1x_2 - 3x_2^{3/2}).
    \end{split}
    }
    Note that $\xx^* = (1,1) \in\rrpp^2$ is a steady state for this system. We would like to conclude that $\xx^*$ is a globally attracting point on $\rrpp^2$. If we can show that the system \eqref{eq:generic-ode} can be generated by an E-graph $G^*$ and that $\xx^*$ is a complex balanced steady state for $G^*$, then \Cref{thm:glv-cb} implies that $\xx^*$ is a globally attracting point for \eqref{eq:generic-ode}.

    \begin{figure}[h!t]
    \centering
    \begin{tikzpicture}[scale=2]
    \draw [step=1, gray!50!white, thin] (-2,-1) grid (2.25,1.75);
    \node at (0,1.35) {};
    \draw [->, black] (-2,0)--(2.25,0);
    \draw [->, black] (0,-1)--(0,1.75);
    \node [inner sep=2pt] (1) at (0,0) {\blue{$\bullet$}};
    \node [inner sep=2pt] (2) at (1,0) {\blue{$\bullet$}};
    \node [inner sep=2pt] (3) at (0,1) {\blue{$\bullet$}};
    \node [inner sep=2pt] (8) at (1,1) {\blue{$\bullet$}};
    
    \node [inner sep=2pt] (4) at (-2,0) {\blue{$\bullet$}};
    \node [inner sep=2pt] (5) at (0,-1) {\blue{$\bullet$}};
    \node [inner sep=2pt] (6) at (2,0) {\blue{$\bullet$}};
    \node [inner sep=2pt] (7) at (0,1.5) {\blue{$\bullet$}};

    \node at (0.5,0) [below] {\blue{$\kk_{21}$}};
    \node at (0,0.5) [left] {\blue{$\kk_{41}$}};
    \node at (1,0.5) [left] {\blue{$\kk_{32}$}};

    \node at (0.5,0) [above] {\blue{$\kk_{12}$}};
    \node at (0,0.5) [right] {\blue{$\kk_{14}$}};
    \node at (0.5,1) [above] {\blue{$\kk_{43}$}};
    \node at (1,0.5) [right] {\blue{$\kk_{23}$}};

    \node at (-1,0) [below] {\blue{$\kk_{51}$}};
    \node at (-1,0) [above] {\blue{$\kk_{15}$}};

    \node at (0,-0.5) [left] {\blue{$\kk_{61}$}};
    \node at (0,-0.5) [right] {\blue{$\kk_{16}$}};

    \node at (1.5,0) [below] {\blue{$\kk_{72}$}};
    \node at (1.5,0) [above] {\blue{$\kk_{27}$}};

    \node at (0,1.15) [above left] {\blue{$\kk_{84}$\,}};
    \node at (0,1.15) [above right] {\blue{\,$\kk_{48}$}};
    
    \draw [-{stealth}, thick, blue, transform canvas={xshift=-0ex, yshift=0.5ex}] (1)--(2) ;
    \draw [-{stealth}, thick, blue, transform canvas={xshift=0ex, yshift=-0.5ex}] (2)--(1) ;
    \draw [-{stealth}, thick, blue, transform canvas={xshift=0.5ex, yshift=0ex}] (1)--(3) ;
    \draw [-{stealth}, thick, blue, transform canvas={xshift=-0.5ex, yshift=-0ex}] (3)--(1) ;
    \draw [-{stealth}, thick, blue, transform canvas={xshift=0.5ex, yshift=0ex}] (2)--(8) ;
    \draw [-{stealth}, thick, blue, transform canvas={xshift=-0.5ex, yshift=-0ex}] (8)--(2) ;
    \draw [-{stealth}, thick, blue, transform canvas={xshift=-0ex, yshift=0ex}] (3)--(8) ;

    \draw [-{stealth}, thick, blue, transform canvas={yshift=-0.5ex}] (4)--(1) ;
    \draw [-{stealth}, thick, blue, transform canvas={yshift=0.5ex}] (1)--(4) ;
    \draw [-{stealth}, thick, blue, transform canvas={xshift=-0.5ex}] (5)--(1) ;
    \draw [-{stealth}, thick, blue, transform canvas={xshift=0.5ex}] (1)--(5) ;

    \draw [-{stealth}, thick, blue, transform canvas={yshift=-0.5ex}] (6)--(2) ;
    \draw [-{stealth}, thick, blue, transform canvas={yshift=0.5ex}] (2)--(6) ;
    \draw [-{stealth}, thick, blue, transform canvas={xshift=-0.5ex}] (7)--(3) ;
    \draw [-{stealth}, thick, blue, transform canvas={xshift=0.5ex}] (3)--(7) ;

    \end{tikzpicture}
    \caption{E-graph $G$ that generates  \eqref{eq:generic-ode} with the appropriate choice of edge weights.}
    \label{fig:generic-ode-graph}
    \end{figure}
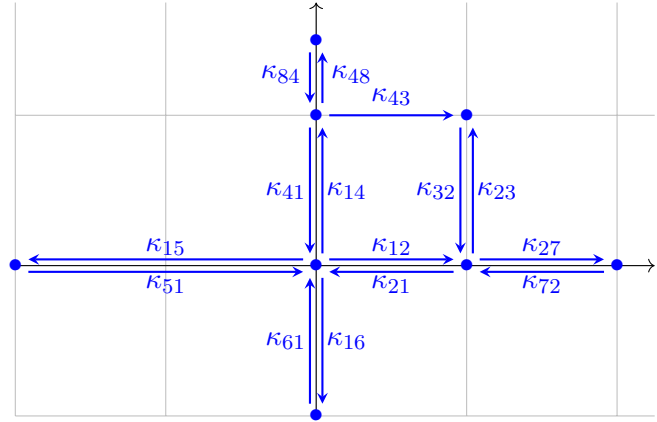 
    
    Consider the \GLV\ system generated by the E-graph $G$ shown in \Cref{fig:generic-ode-graph}:
    \eqn{\label{eq:generic-ode-graph}
    \begin{split}
        \dfrac{d}{dt}\vc{x_1\\ x_2} &= \diag(x_1,x_2)\left(\vc{2\kk_{51} \\ 0}x_1^{-2} + \vc{0\\ \kk_{61}}x_2^{-1} +  \vc{-2\kk_{15} + \kk_{12}\\ -\kk_{16} +\kk_{14}} + \vc{-\kk_{21} + \kk_{27}\\ \kk_{23}}x_1\right. \\
        &\left. + \vc{\kk_{43}\\ -\kk_{41} + \frac{1}{2}\kk_{48}}x_2 + \vc{-\kk_{34}\\ -\kk_{32}}x_1x_2 + \vc{-\kk_{72}\\0}x_1^2 + \vc{0\\-\frac{1}{2}\kk_{84}}x_2^{3/2} \right).
    \end{split}
    }
    Note that \eqref{eq:generic-ode} and \eqref{eq:generic-ode-graph} are equal if we make the following choices for the edge weights in \Cref{fig:generic-ode-graph}:
    \begin{gather*}
        \kk_{12}=3,\qquad  \kk_{21}=7,  \qquad \kk_{14}=11,\qquad  \kk_{41}=7,\qquad  \kk_{23}=2, \qquad \kk_{32}=6,\qquad  \kk_{43}=4, \\ 
        \kk_{34}=0, \qquad
        \kk_{15}=\kk_{51}=5, \qquad \kk_{16}=\kk_{61}=6, \qquad \kk_{27}=\kk_{72}=1, \qquad \kk_{48}=\kk_{84}=6.
    \end{gather*}
    Furthermore, it can be checked directly that $\xx^*=(1,1)$ is complex balanced for $(G,\vv\kk)$. Thus by \Cref{thm:glv-cb}, $\xx^*$ is globally attracting for \eqref{eq:generic-ode}.

    More generally, if we look at the system \eqref{eq:intro-ex} with {\em arbitrary} parameters $I_i, r_i, a_{ij}, b_{ij}$, and we assume that $I_i, a_{ij}, b_{ij}$ are {\em positive}, then there exist positive choices of parameters $\kappa_{ij}$ for the graph $G$ in \Cref{fig:generic-ode-graph} that will give rise to the system \eqref{eq:intro-ex}; this is a key step for obtaining the complex balance property. 
    We will analyze this in depth in future work~\cite{CraciunRojasLaLuz2025}.
\end{ex}

\section{Discussion}
\label{sec:discussion}

In this paper we analyze the global stability of generalized Lotka--Volterra (\GLV) systems of the form~\eqref{eq:GLV_intro}, without restrictions on the number of variables or the degree of the polynomials on the right-hand side. Our main result is \cref{thm:glv-cb}, which says that if the system~\eqref{eq:GLV_intro} has a  steady state that is complex balanced with respect to some graph $G$, then all its steady states are complex balanced, and there exists a foliation of the state space $\rrpp^n$  into compatibility manifolds, such that each compatibility manifold is invariant and contains exactly one complex balanced steady state $\xx^*$, which is globally attracting within its compatibility manifold.  

Our inspiration for the statement and the proof of \cref{thm:glv-cb} comes from the theory of mass-action systems, where the notion of complex balance was first introduced~\cite{HornJackson1972}. Interestingly, \cref{thm:glv-cb} provides a \GLV\ analogue of the {\em global attractor conjecture for mass-action systems}, which has only been proved under additional assumptions on the graph $G$~\cite{Anderson2011, Craciun2013, Gopalkrishnan2014}.

The central idea guiding our application of the theory to specific \GLV\ systems is to uncover {\em an underlying graph structure} that can give rise to that system (referred to as an {\em E-graph}, and inspired by {\em reaction networks} from mass-action kinetics). A well-chosen graph structure (more specifically, finding a graph that is {\em weakly reversible}) allows us to study the global stability of the corresponding dynamical system. This approach  unifies the analysis of different classes of dynamical systems, and  provides a framework for deriving new insights into their behavior. 

An important technical detail in this work is the use of the equivalence between \GLV\ systems and {\em polyexponential systems}; in future work we will exploit this equivalence further, in order to obtain  a proof of the {\em persistence conjecture} for weakly reversible \GLV\ systems~\cite{CraciunRojasLaLuzJin2025}, and in our analysis of {\em disguised complex balance}~\cite{CraciunRojasLaLuz2025}, to significantly relax the complex balance assumption while still being able to conclude global stability.


\section*{Acknowledgments}

This work was supported in part by the National Science Foundation grant DMS-2051568.

\bibliographystyle{siam}
\bibliography{cit}

\end{document}